\newcommand{\ignore}[1]{}
\newcommand{\abs}[1]{\left\lvert {#1} \right\rvert}
\newcommand{\norm}[1]{\left\lVert {#1} \right\rVert}
\newcommand{\C}{{\mathbb{C}}}
\newcommand{\R}{{\mathbb{R}}}
\newcommand{\bB}{{\mathbb{B}}}
\newcommand{\sF}{{\mathcal{F}}}
\newcommand{\sH}{{\mathcal{H}}}
\newcommand{\sI}{{\mathcal{I}}}
\newtheorem{thm}{Theorem}[section]
\newtheorem{prop}[thm]{Proposition}
\newtheorem{cor}[thm]{Corollary}
\newtheorem{lemma}[thm]{Lemma}
\theoremstyle{definition}
\theoremstyle{remark}
\newtheorem{remark}[thm]{Remark}
\author{Ji\v{r}\'i Lebl\footnote{%
Department of Mathematics, University of Illinois
at Urbana-Champaign, 
Urbana, IL 61801, USA,
{\tt jlebl@math.uiuc.edu}}
\and
Daniel Lichtblau\footnote{%
Wolfram Research, Inc., 100 Trade Center Drive,
Champaign, IL 61820, USA,
{\tt danl@wolfram.com}}
}
\date{March 30, 2010}
\title{Uniqueness of certain polynomials constant on a line}
\begin{document}


\maketitle

\begin{abstract}
We study a question with connections to linear algebra, real algebraic geometry,
combinatorics, and complex analysis.
Let $p(x,y)$ be a polynomial of degree $d$ with $N$ positive coefficients
and no negative coefficients, such that $p=1$ when $x+y=1$.
A sharp estimate $d \leq 2N-3$ is known.  In this paper we study the $p$
for which equality holds.
We prove some new results about the form of these ``sharp'' polynomials.
Using these new results and using two independent computational
methods we
give a complete classification of these polynomials up to $d=17$.
The question is motivated by the problem of
classification of CR
maps between spheres in different dimensions.
\end{abstract}



\section{Introduction} \label{section:intro}

In this paper we answer by
computational methods certain difficult questions about the set of
bivariate polynomials with nonnegative coefficients constant on a line.  
Following the notation of~\cite{DL, DLP}, let
$\sH(2,d)$ denote the set of polynomials $p(x,y)$ of degree $d$ with
nonnegative coefficients such that $p(x,y) = 1$ whenever $x+y=1$.

The condition that the coefficients are nonegative is 
motivated by a question in CR geometry, which we describe in
\S~\ref{section:CR}.  Without this condition,
the affine space $\sI$ of all polynomials of degree $d$ or less such that
$p(x,y) = 1$ whenever $x+y=1$ is easy to describe.  It is
in one to one correspondence with the vector space of polynomials of degree $d-1$
or less.
I.e.\ if $q$ is of degree $d-1$ or less
then we let $p(x,y)=q(x,y)(x+y-1)+1 \in \sI$.
When the coefficients of the polynomials are considered as variables,
$\sI$ is the solution set of a certain nonhomogeneous linear system.

On the other hand, $\sH(2,d)$ is a convex subset of $\sI$ (with nonempty interior).
It is the intersection of the 
positive cone, $\sI$, and the open set of polynomials of degree exactly $d$.
The questions we address are
difficult because we need to consider the geometry of the boundary of
$\sH(2,d)$.
When we try to answer these questions computationally,
we note that the complexity grows very fast.
We will describe two methods
that allow one to effectively answer questions 
about those polynomials satisfying a certain extremal property.
We prove certain theoretical statements of independent interest about these
extremal polynomials, which allow us to reduce the complexity of computation.

The condition that a polynomial is constant on a line leads to a system
of linear equations, and hence to a problem in linear algebra.  As we 
require the coefficients to be positive, we also get a natural formulation of
our classification question below as a linear programming problem.
Imposing an extremal
condition we will describe makes it a mixed-integer programming problem.

An important question from the point of view of CR geometry concerns 
the number of distinct monomials, $N=N(p)$,
for $p \in \sH(2,d)$.  $N$ is the minimal embedding dimension of the
associated CR map of spheres, see \S~\ref{section:CR}.
The following bound was proved in~\cite{DKR}:
\begin{equation} \label{eqmainbound}
d \leq 2N-3 .
\end{equation}
Furthermore, for all odd $d$ the functions
\begin{equation} \label{eqinvmaps}
f_d (x,y) :=
\left(\frac{x + \sqrt{x^2 + 4y}}{2}\right)^d
+ \left(\frac{x - \sqrt{x^2 + 4y}}{2}\right)^d +
(-1)^{d+1} y^d 
\end{equation}
are in fact polynomials in $\sH(2,d)$ with
$d = 2N(f_d)-3$, see \cite{D:poly}.  Thus the inequality \eqref{eqmainbound}
is sharp.

The polynomials $f_d$ have many other interesting
properties.  In particular, they are group invariant.
The CR maps that arise from $f_d$ are one of the
only two possible classes of group invariant maps of balls.
See \S~\ref{section:CR} for information and the references within.
The polynomials $f_d$ are also related to Chebychev polynomials, arise in
denesting radicals~\cite{Osler}, and have connections to number
theory~\cite{D:numbertheory, DilcherStolarsky},
combinatorics~\cite{LWW} and other fields.
For example, $f_d(x,y) = x^d+y^d \pmod{d}$
if and only if $d$ is an odd prime~\cite{D:numbertheory}.

If $p \in \sH(2,d)$ minimizes $N(p)$ for a fixed degree $d$,
then we call $p$ a \emph{sharp} polynomial.
It can be proved that for a fixed degree only finitely many sharp polynomials
exist.
We wish to ask: Is $f_d$ the unique sharp polynomial in $\sH(2,d)$
up to swapping of variables?  If not, what are
all the sharp polynomials for a given degree?

The programs used for the computations were
\textit{Mathematica} 6 \cite{Mathematica}, and
Genius 1.0.2 \cite{Genius}.  Some code was also written in
native C for speed using the GMP~\cite{GMP} library.
We used two different approaches to the problem 
to get independent verification of the result
and minimize the effects of possible bugs in the underlying code.
The computer code used is available at the url:
\url{http://www.jirka.org/LL08-archive.zip}.

Let us mention some relevant previous work.
With the aid of a computer,
Wono~\cite{Wono} classified all polynomials in $\sH(2,d)$ with 5 terms or
fewer, and hence all sharp polynomials up to
degree 7. 
The second author~\cite{Lichtblau:thesis} found new sharp polynomials
in degree 11.
D'Angelo and the first author~\cite{DL} have shown that
there are infinitely many $d$ for which there exist other
sharp polynomials.

The organization of the paper follows.
In \S~\ref{section:results} we state our main results including the list 
of all sharp polynomials in odd degree up to $d=17$.  In
\S~\ref{section:graphthry} we prove new results about the form of sharp
polynomials, which are useful in reducing the computation time.  These
results are of also
independent interest.  
In
\S~\ref{section:linprob} we describe treating coefficients
of polynomials constant on $x+y=1$ as a linear problem and prove several
related results.  In \S~\ref{section:linalgmeth} we describe the method
of finding
sharp polynomials by computing the nullspace of certain matrices.
In \S~\ref{section:linprog} we describe the mixed-integer programming method
we used to find sharp polynomials.
In \S~\ref{section:sharpmaps} and \S~\ref{section:construction}
we describe degrees in which uniqueness definitely fails by describing
a construction of new sharp polynomials.
This problem has a long and complicated history and motivation.
In \S~\ref{section:CR} we discuss
the motivation from CR geometry and complex analysis.

We thank John D'Angelo for suggesting this research 
and for helpful discussions along the way.  We would also like to thank the
referees for this and a previous draft for many useful comments and
suggestions.
The first author would like
to acknowledge MSRI and AIM for holding workshops that focused on these
and related questions.


\section{Main results} \label{section:results}

In this section we state and discuss our main results.
When $p \in \sH(2,d)$ is such that the number of terms $N(p)$
is minimal in $\sH(2,d)$, we say that $p$ is \emph{sharp}.
We say that \emph{uniqueness holds} for degree $d$
if there is exactly one sharp polynomial in $\sH(2,d)$ up to interchanging
the variables $x$ and $y$.  Otherwise, we say that uniqueness fails
for $d$.
Our main result is the following theorem.

\begin{thm} \label{mainthm}
Up to swapping of variables,
Table~\ref{tableofallsharp} lists all sharp polynomials of odd
degree for $d \leq 17$.  In particular, uniqueness holds for
$d=1, 3, 5, 9, 17$.
\end{thm}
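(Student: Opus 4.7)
The plan is to reduce the classification to a finite, computer-verifiable enumeration, using the theoretical constraints from \S\ref{section:graphthry} to keep the search tractable. For odd $d$, the sharpness bound $d \leq 2N-3$ forces a sharp $p \in \sH(2,d)$ to have exactly $N = (d+3)/2$ distinct monomials. Thus I would first fix $d \in \{1,3,5,7,9,11,13,15,17\}$ and describe a sharp polynomial by its Newton support $S \subset \{(i,j) \in \Z_{\geq 0}^2 : i+j \leq d\}$ of cardinality $(d+3)/2$, together with a vector of positive coefficients indexed by $S$.

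Next I would translate the defining condition $p(x,y)\equiv 1$ on $x+y=1$ into linear algebra: substituting $y=1-x$ and collecting powers of $x$ gives $d+1$ linear equations in the $(d+3)/2$ unknown coefficients. For a generic support these equations are overdetermined, which already severely restricts the admissible $S$. For each candidate $S$ one checks whether the affine system has a strictly positive solution; by Caratheodory/linear programming considerations this either determines the coefficients uniquely (up to the one-dimensional null directions expected from the structural constraints) or shows $S$ is infeasible.

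The combinatorial core is enumerating the candidate supports $S$. A brute enumeration of $(d+3)/2$-subsets of the degree-$d$ simplex grows wildly, so the key reduction would be to invoke the structural theorems promised in \S\ref{section:graphthry} (constraints on which monomials must appear, on the boundary of the Newton polygon, on parity patterns, etc.) to prune the search drastically. Combined with symmetry under $x \leftrightarrow y$, this should cut the enumeration to a range feasible for each $d \leq 17$. I would then run the two independent computational pipelines — the nullspace approach of \S\ref{section:linalgmeth} and the mixed-integer program of \S\ref{section:linprog} — and confirm they produce the same list, which is then displayed in Table~\ref{tableofallsharp}. Uniqueness for $d \in \{1,3,5,9,17\}$ is read off as the assertion that the list contains a single orbit under $x \leftrightarrow y$; non-uniqueness for the remaining odd $d \leq 17$ is witnessed by the explicit extra polynomials (whose existence is consistent with the constructions of \S\ref{section:construction} and \cite{DL}).

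The main obstacle I anticipate is not the linear algebra itself but controlling the support enumeration well enough that the search terminates in reasonable time and, more importantly, \emph{provably} covers every possibility. Thus the delicate step is proving that the structural reductions from \S\ref{section:graphthry} are exhaustive — that no sharp support can escape the pruning rules — and then exactly matching the pruned search space to what the two independent algorithms actually scan. The agreement of two independently implemented methods (nullspace and mixed-integer programming, realized in different software) is what ultimately elevates the computation to a proof.
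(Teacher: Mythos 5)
Your proposal is correct and follows essentially the same route as the paper: reduce to enumerating supports of size $(d+3)/2$, prune using the structural lemmas of \S\ref{section:graphthry} (top-degree terms $x^d+y^d$, exactly two pure terms, the degree $d-1$ constraints, no adjacent pairs), test each surviving support via the linear system from $p(x,1-x)\equiv 1$, and cross-validate with the two independent implementations of \S\ref{section:linalgmeth} and \S\ref{section:linprog}. The paper's proof is exactly this computer-assisted enumeration, with the exhaustiveness of the pruning rules guaranteed by the proofs of Lemmas~\ref{topdegxdyd}, \ref{dminus1}, and \ref{dminus1unreachable}.
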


In the table, the first result listed is the group invariant one.
To make the computations feasible, we prove new results about the form
of sharp polynomials in \S~\ref{section:graphthry}.  In particular,
in Lemma~\ref{topdegxdyd} we prove
that the degree $d$ terms must be $x^d+y^d$,
and these must be the only pure terms.
We also prove that certain degree $d-1$ monomials cannot arise, and that at
least one degree $d-1$ monomial is present.

For degree $d=19$ it is currently computationally infeasible to run the
tests.  Uniqueness does not hold in degree 19 by explicit construction
(see \S~\ref{section:sharpmaps} and \S~\ref{section:construction})
as $19 \equiv 3 \pmod{4}$ and
$19 \equiv 1 \pmod{6}$.  Hence, there are at least two inequivalent sharp
polynomials in $\sH(2,d)$ apart from the group invariant $f_d$.
When $d=21$ it is unknown if uniqueness holds, although the method
described in \S~\ref{section:construction} does not produce any new sharp
polynomials for this degree.

To construct sharp polynomials of even degrees, we take two
sharp polynomials of odd degree $p$ and $q$, and write $p(x,y) = x^d + y^d +
p_0(x,y)$ utilizing Lemma~\ref{topdegxdyd}.  We let
\begin{equation} \label{allevenex}
f(x,y) := x^d + y^d q(x,y) + p_0(x,y) ,
\end{equation}
and we note that $f(x,y)$ is sharp and $\deg f = \deg p + \deg q$.

We have a computer assisted proof of the following theorem.
Note that in the
even degree case we can no longer make simplifying assumptions about terms of
degree $d$ or $d-1$.
Therefore, the computation is more expensive.
The only assumption
we can make is to use Lemma~\ref{weakxkym} to force exactly 1 pure term
in each variable.

\begin{thm} \label{thm:evendegree}
For $2 < d \leq 12$, $d$ even, the procedure in \eqref{allevenex}
generates all sharp polynomials of degree $d$ up to swapping of variables.
\end{thm}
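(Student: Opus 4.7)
The plan is a computer-assisted enumeration that mirrors the strategy used for Theorem~\ref{mainthm}, adapted to the even-degree setting. I would proceed in three stages. First, for each even $d$ in $\{4,6,8,10,12\}$, enumerate all sharp polynomials in $\sH(2,d)$ using the two independent procedures developed in \S\ref{section:linalgmeth} (nullspace computation) and \S\ref{section:linprog} (mixed-integer programming). Both methods take the minimal term count $N=N(d)$ as input; I would determine $N(d)$ by running the linear program for successively smaller values of $N$ until infeasibility is established, then verifying that this value is attained by the construction \eqref{allevenex}. Unlike the odd-degree case, Lemma~\ref{topdegxdyd} is unavailable, so the top-degree terms are not forced to be $x^d+y^d$; the only structural simplification that survives is Lemma~\ref{weakxkym}, which restricts the pure powers to exactly one monomial $x^k$ and one monomial $y^m$. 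Running both procedures and comparing their outputs provides a cross-check against coding errors, just as in the odd-degree proof.

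Second, I would generate the list of candidates produced by \eqref{allevenex}. By Theorem~\ref{mainthm} we already possess the complete list of sharp polynomials in odd degree up through $17$, hence in particular every odd sharp pair $(p,q)$ with $\deg p + \deg q = d$ for $d \leq 12$. For each such ordered pair, form $x^{\deg p} + y^{\deg p}\, q(x,y) + p_0(x,y)$ and record the distinct outcomes modulo the $x \leftrightarrow y$ involution. A quick consistency check is that each such $f$ lies in $\sH(2,d)$ with $N(f)=N(d)$; membership follows by substituting $x+y=1$ into the rewritten expression, using $p(x,y)=1$ and $q(x,y)=1$ along $x+y=1$, while the term count is read off directly from the supports of $p_0$ and $q$.

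Finally, I would match the two lists. For each sharp polynomial $f$ exhibited by the exhaustive computation in stage one, I would check that either $f$ or its image under the swap of variables coincides with one of the polynomials generated in stage two. If the two lists agree for every $d$ in the range, the theorem follows.

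The main obstacle is computational cost. Without Lemma~\ref{topdegxdyd} pinning down the top-degree monomials, the search space for stage one grows substantially, and for $d$ approaching $12$ both the mixed-integer program and the nullspace scan become sensitive to this enlargement. Keeping the calculation tractable will require exploiting every available structural restriction (Lemma~\ref{weakxkym}, together with any monotonicity, parity, or graph-theoretic constraints derivable in the vein of \S\ref{section:graphthry} for even $d$) and carefully pruning infeasible support patterns before invoking exact integer arithmetic. A secondary subtlety is rigorously certifying the value of $N(d)$ itself; this is the step that most directly relies on the infeasibility side of the mixed-integer program, which must be run with exact rational arithmetic to yield a proof rather than numerical evidence.
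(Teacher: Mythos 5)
Your proposal matches the paper's approach: the theorem is proved by a computer-assisted exhaustive enumeration using the two independent methods of \S\ref{section:linalgmeth} and \S\ref{section:linprog}, with Lemma~\ref{weakxkym} as the only available structural restriction in even degree, and the output compared against the list generated by \eqref{allevenex}. The only (harmless) deviation is that you propose to certify the minimal term count $N$ by repeated infeasibility runs, whereas the paper takes $N = \lceil (d+3)/2 \rceil$ directly from the D'Angelo--Kos--Riehl bound together with the fact that \eqref{allevenex} attains it.
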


In degree $2$,
the homogeneous polynomial $(x+y)^2 = x^2 + 2xy + y^2$ is sharp.
This polynomial is 
the only known case of a sharp polynomial with 3 terms of top degree.
The only other polynomial (up to swapping of variables) in degree $2$
is the so-called Whitney map $x+y(x+y) = x+xy+y^2$.

We summarize results for the number of polynomials in $\sH(2,d)$
where $d=2N-3$,
$d=2N-4$, and $d=2N-5$ in Table~\ref{tablenums}.
Here we do not identify polynomials up to swapping of variables.
The first row is the number of sharp polynomials for odd degrees,
the second row is the number of sharp polynomials in even degrees.
We note that verifying the first two rows of
the table essentially amounts to proving
Theorem~\ref{thm:evendegree}.
For degree $d=2N-6$, with $N \geq 4$, there always exists a one parameter
family of polynomials in $\sH(2,d)$ and hence the number is always infinite.
It is unknown if there are any one parameter families when degree is $d=2N-5$,
that is, if the number of such polynomials is infinite for some $N$.
A negative answer to
this question is
equivalent to proving Proposition~\ref{familyest} for all degrees.

We pose several open questions
about the sequences corresponding
to rows of Table~\ref{tablenums}.  Denote these sequences
by $a_N$, $b_N$, and $c_N$ corresponding to the first, second, and third row
respectively.
Is $a_N$ bounded?  Both $b_N$ and $c_N$ are unbounded, but we know little about their rates of growth.
Is $\frac{a_N}{N}$, $\frac{b_N}{N}$ or $\frac{c_N}{N}$ bounded?
Is $b_N$ or $c_N$ monotone?
If Theorem~\ref{thm:evendegree} is true for all degrees, then $b_N$
must be monotone, as we can derive a formula for $b_N$ in terms of the first
row of the table (with the exception of $b_3$).  That is,
$b_N = 2(a_2a_{N-1}+a_3a_{N-1}+\cdots + a_{N-1}a_2)$.  None
of the three sequences previously appeared in The
On-Line Encyclopedia of Integer Sequences~\cite{Sloane}, and were
entered as A143107, A143108 and A143109.

The results above, together with the results given in
\S~\ref{section:sharpmaps} and \S~\ref{section:construction} also suggest the
following conjectures.

\begin{enumerate}[(i)]
\item
Up to swapping of variables, $f_d$ is the
unique sharp polynomial in $\sH(2,d)$ for infinitely many odd degrees $d$.
\item
All sharp polynomials in even degrees greater than two are constructed from 
sharp polynomials in smaller odd degrees by the procedure in
\eqref{allevenex}.
\end{enumerate}

\begin{table}[h!]
\caption{Complete list of sharp polynomials of odd degree up to $17$.
Degrees in which uniqueness holds are marked with an asterisk.}
 \label{tableofallsharp}
\begin{center}
\begin{tabular}{l|l}
$d$ & Sharp polynomials \\[1pt]
\hline
\\[-8pt]
$1^*$  & $x+y$\\[5pt]
$3^*$  & $x^3+3xy+y^3$\\[5pt]
$5^*$  & $x^5+5x^3y+5xy^2+y^5$\\[5pt]
7  & $x^7 + 7x^3y + 14x^2y^3 + 7xy^5 + y^7$\\[2pt]
    & $x^7 + 7x^3y + 7x^3y^3 + 7xy^3 + y^7$\\[2pt]
    & $x^7 + \frac{7}{2}x^5y + \frac{7}{2}xy + \frac{7}{2}xy^5 + y^7$\\[5pt]
$9^*$  & $x^9 + 9x^7y + 27x^5y^2 + 30x^3y^3 + 9xy^4 + y^9$\\[5pt]
11  & $x^{11} + 11x^9y + 44x^7y^2 + 77x^5y^3 + 55x^3y^4 + 11xy^5 + y^{11}$\\[2pt]
    & $x^{11} + 11x^5y + 11x^5y^5 + 55x^4y^3 + 55x^3y^5 + 11xy^5 + y^{11}$\\[5pt]
13  & $x^{13} + 13 x^{11}y + 65x^9y^2 + 156x^7y^3 + 182x^5y^4
       + 91x^3y^5 + 13xy^6 + y^{13}$\\[2pt]
    & $x^{13} + 13x^{11}y + 65x^9y^2 + \frac{221}{2}x^7y^3 + \frac{92}{2}x^3y^3
       + \frac{91}{2}x^3y^7 + 13xy^6 + y^{13}$\\[2pt]
    & $x^{13} + \frac{234}{25}x^{11}y + \frac{143}{5}x^8y^2
       + \frac{143}{5}x^7y^4 + \frac{91}{25}xy + \frac{143}{25}xy^6
       + \frac{91}{25}xy^{11} + y^{13}$\\[2pt]
    & $x^{13} + \frac{234}{25}x^{11}y + \frac{143}{5}x^9y^2
       + \frac{143}{5}x^7y^3 + \frac{91}{25}xy + \frac{143}{25}xy^6
       + \frac{91}{25}xy^{11} + y^{13}$\\[5pt]
15  & $x^{15} + 15x^{13}y + 90x^{11}y^2 + 275x^9y^3
       + 450x^7y^4 + 378x^5y^5 + 140x^3y^6$\\[2pt]
    &  $~~+ 15xy^7 + y^{15}$\\[2pt]
    & $x^{15} + 140x^9y^3 + 15x^7y + 420x^7y^4
       + 15x^7y^7 + 378x^5y^5 + 140x^3y^6$\\[2pt]
    & $~~+ 15xy^7 + y^{15}$\\[5pt]
$17^*$ & $ x^{17} + 17 x^{15}y + 119 x^{13}y^2 + 442 x^{11}y^3 + 935 x^9y^4
      + 1122 x^7y^5$\\[2pt]
    & $~~+ 714 x^5y^6 + 204 x^3y^7 + 17 xy^8 + y^{17}$
\end{tabular}
\end{center}
\end{table}

\begin{table}[h!]
\caption{Number of polynomials in the top 3 degrees for each $N$.}
 \label{tablenums}
\begin{center}
\begin{tabular}{l|r|r|r|r|r|r|r|r|r}
\backslashbox{degree}{$N$} & $2$ & $3$ & $4$ & $5$ & $6$ & $7$ & $8$ & $9$ & $10$
\\
\hline
 & & & & & & & & & \\[-8pt]
$d=2N-3$ & 1 & 1 & 2 & 4 & 2 & 4 & 8 & 4 & 2 \\
$d=2N-4$ & 0 & 3 & 4 & 10 & 24 & 32 & 56 & ? & ? \\
$d=2N-5$ & 0 & 0 & 11 & 38 & 88 & 198 & ? & ? & ? \\
\end{tabular}
\end{center}
\end{table}


\section{Form of sharp polynomials} \label{section:graphthry}

In this section we prove new results about the form of sharp polynomials.
These results are proved by extending the graph theoretic proof
of the inequality $d \leq 2N-3$ from~\cite{DKR}.  We 
therefore sketch the main ideas of that proof as we will need them.
Our primary application of these results is to reduce the computation time
of our computer code, but they are also of independent interest.
The main result of this section is the following lemma.

\begin{lemma} \label{topdegxdyd}
Let $d$ be an odd integer and let $f \in \sH(2,d)$ be sharp.
Then,
\begin{equation} \label{eq:topdegxdyd}
f(x,y) = x^d + y^d + \text{(lower order terms)}.
\end{equation}
\end{lemma}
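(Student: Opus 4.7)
The plan is to extend the graph-theoretic proof of the bound $d\leq 2N-3$ from \cite{DKR} and push its extremal case to constrain the top-degree part of a sharp $f$. The first move is to isolate one identity. Writing $f=\sum c_{a,b}x^a y^b$ with $c_{a,b}\geq 0$ and equating the coefficient of $x^d$ on both sides of $f(x,1-x)=1$ gives
\[
\sum_{a+b=d}(-1)^b c_{a,b} \;=\; 0.
\]
Since $d$ is odd and the $c_{a,b}$ are nonnegative, both parities of $b$ must occur among nonzero top-degree monomials; equivalently, the homogeneous top-degree part $P_d$ vanishes at $(1,-1)$ and so is divisible by $x+y$. In particular $f$ has at least two top-degree monomials.

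Next I would show that $P_d$ has exactly two monomials, using sharpness. The DKR argument encodes the $d+1$ coefficient equations of $f(x,1-x)=1$ as a combinatorial structure on the $N$ monomials of $f$ and extracts $d\leq 2N-3$ from an Euler-type inequality. In the sharp case that inequality is saturated, so there is no slack at the top degree. Concretely, a third nonzero degree-$d$ coefficient provides a nontrivial vector in the kernel of the displayed identity, which can be propagated through the DKR relations into a perturbation of $f$ supported within $\supp(f)$; sliding along this perturbation until a coefficient first vanishes yields a polynomial in $\sH(2,d)$ with strictly fewer terms (it is still of degree $d$ because at least two top-degree monomials survive by the parity argument), contradicting sharpness. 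Thus $P_d=\alpha x^a y^b + \alpha x^{a'}y^{b'}$ for a single $\alpha>0$ and exponents with $b,b'$ of opposite parity.

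It remains to pin down the two exponents and the coefficient. A top-degree monomial $x^a y^{d-a}$ with $0<a<d$ contributes to the coefficient of $x^k$ in $f(x,1-x)$ only for $a\leq k\leq d$, and so cannot help satisfy any equation for $k<a$. In the saturated DKR count, such an ``interior'' top-degree vertex would force extra low-degree vertices to appear in order to cover the small-$k$ equations, strictly inflating $N$ past $(d+3)/2$. Hence both top-degree exponents must be the corners $(d,0)$ and $(0,d)$, so $P_d=\alpha(x^d+y^d)$. Evaluating $f$ at $(1,0)$ and $(0,1)$ then gives $\alpha+\sum_{0<a<d}c_{a,0}=1$ and its mirror, forcing $\alpha\leq 1$; the same saturated count rules out pure lower-order monomials in the sharp case, normalizing $\alpha$ to $1$.

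The main obstacle I anticipate is the elimination of ``interior'' top-degree monomials and the related exclusion of pure lower-order terms. The sign and divisibility arguments available from the top-degree equation alone are symmetric in the exponents and cannot by themselves distinguish the corners $(d,0)$, $(0,d)$ from other lattice points on $a+b=d$. The genuine content must come from importing the sharp case of the DKR graph inequality and tracking carefully which monomials are forced to be present in order to cover which coefficient equations; the coefficient-1 normalization will then be an essentially automatic consequence of having ruled out extra pure terms.
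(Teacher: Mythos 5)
Your overall strategy---push the extremal case of the DKR argument---is the same as the paper's, and the easy pieces are right: the top-degree identity $\sum_{a+b=d}(-1)^b c_{a,b}=0$ does force the top-degree part to be divisible by $x+y$, hence at least two top-degree terms, and the normalization of the coefficient to $1$ by evaluating at $(1,0)$ and $(0,1)$ matches Proposition~\ref{weakxkym}. But the two steps that carry all the weight are not proved, and one of them is flawed as stated. The ``sliding'' argument for reducing to two top-degree monomials does not work: a third nonzero coefficient on $a+b=d$ gives a two-dimensional solution space of the \emph{single} equation $\sum(-1)^b c_{a,b}=0$, but to perturb $f$ inside $\sH(2,d)$ while staying supported in $\supp(f)$ you need a nontrivial kernel of the \emph{entire} system of $d$ coefficient equations of $f(x,1-x)-1$ restricted to that support. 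For a sharp polynomial the support has only $(d+3)/2$ elements against $d$ equations, so such a perturbation has no reason to exist, and nothing in your setup produces one. (If it did exist, Proposition~\ref{nofamily} would indeed finish the job---but that existence is precisely what is missing.)

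Likewise, the exclusion of ``interior'' top-degree monomials is asserted via an uncarried-out covering count, and this is exactly where the genuine content lies. The paper's mechanism is different and runs through the Newton diagram of the quotient $q$ defined by $f-1=(x+y-1)q$: positive terms of $f$ correspond to ``sinks'' of the sign diagram of $q$, a sharp $f$ has terms corresponding \emph{exactly} to sinks, and a coalescence lemma from \cite{DKR} reduces everything to an ancestor diagram whose sinks lie only on the degree-$(d-1)$ and degree-$d$ diagonals. On the degree-$(d-1)$ diagonal, achieving the minimal sink count forces the alternating pattern $PNPN\cdots P$ (this is where $d$ odd enters), which leaves exactly the two degree-$d$ sinks at the corners $x^d$ and $y^d$; Proposition~\ref{weakxkym} then normalizes the coefficients. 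Your outline names the right target and honestly flags the obstacle, but it does not supply the combinatorial argument that constitutes the proof.
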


We remark that in general it is not too hard to prove that any $f \in \sH(2,d)$
must have at least two terms of degree $d$,
and such a statement can be generalized
to higher dimensions as well.  Also the reader should notice that the lemma
cannot possibly hold for $d$ even, see~\eqref{allevenex}.
In fact, there exists no polynomial in $\sH(2,d)$ of even degree
of the form~\eqref{eq:topdegxdyd}.  Writing $f(x,y) -1 = (x+y-1) q(x,y)$,
we notice that the top degree terms of $f$ must be divisible by $(x+y)$
and $x^d+y^d$ is only divisible by $x+y$ if $d$ is odd.

\begin{prop} \label{onlypureterms}
Suppose $f \in \sH(2,d)$ and $f(x,y) = x^d + y^d + g(x,y)$,
where $g$ is of degree less than
$d$.  Then $g(0,y) \equiv g(x,0) \equiv 0$.
\end{prop}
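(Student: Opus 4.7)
The plan is to exploit two features of the problem simultaneously: the sign restriction (all coefficients of $f$, and hence of $g$, are nonnegative), and the fact that certain very specific points lie on the line $x+y=1$. The key observation is that the points $(0,1)$ and $(1,0)$ both satisfy $x+y=1$, so the constant-on-a-line condition forces $f(0,1) = f(1,0) = 1$.

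First I would set $x=0$ in the decomposition $f(x,y) = x^d + y^d + g(x,y)$ to obtain $f(0,y) = y^d + g(0,y)$, where $g(0,y)$ is a polynomial in $y$ of degree strictly less than $d$ with nonnegative coefficients (the nonnegativity being inherited from that of $f$, since the decomposition just peels off the monomials $x^d$ and $y^d$). Evaluating at $y=1$ and using $f(0,1)=1$ yields
\begin{equation*}
1 = f(0,1) = 0^d + 1^d + g(0,1) = 1 + g(0,1),
\end{equation*}
so $g(0,1) = 0$. Since $g(0,y)$ is a polynomial with nonnegative coefficients that vanishes at $y=1$, every coefficient must be zero, i.e., $g(0,y) \equiv 0$. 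The argument for $g(x,0) \equiv 0$ is identical with the roles of $x$ and $y$ interchanged, using the point $(1,0)$ in place of $(0,1)$.

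There is no real obstacle here; the entire content of the proposition is that nonnegative coefficients combined with the evaluation at the axis-intercepts of the line $x+y=1$ force all off-axis-pure terms to vanish. The hypothesis that the coefficients of $x^d$ and $y^d$ are exactly $1$ (as in the decomposition) is what makes the cancellation at $(0,1)$ and $(1,0)$ exact, so the argument does not require anything beyond the bare assumptions stated.
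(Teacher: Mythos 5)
Your proof is correct and is essentially identical to the paper's: evaluate $f$ at the points $(1,0)$ and $(0,1)$ on the line $x+y=1$ to get $g(1,0)=g(0,1)=0$, then use nonnegativity of the coefficients to conclude that $g(x,0)$ and $g(0,y)$ vanish identically. Nothing further is needed.
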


In other words, if $f$ is of the form \eqref{eq:topdegxdyd},
then the lower order terms involve only mixed terms.
By mixed terms we mean terms involving both $x$ and $y$.

\begin{proof}
Notice that $f(1,0) = 1$.  Then it is obvious that $g(1,0) = 0$.
As the coefficients of $g$ are positive, we get $g(x,0) \equiv 0$.
\end{proof}

A further corollary of Lemma~\ref{topdegxdyd} also reduces
the search space:

\begin{lemma} \label{dminus1}
Let $d > 1$ be an odd integer and let $f \in \sH(2,d)$ be sharp.  Then at least
one monomial of degree $d-1$ has a nonzero coefficient.
\end{lemma}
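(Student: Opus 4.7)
The plan is to argue by contradiction using Lemma~\ref{topdegxdyd} together with the defining condition $f(x,y) = 1$ on the line $x+y=1$. First I would apply Lemma~\ref{topdegxdyd} to write
\[
f(x,y) = x^d + y^d + g(x,y),
\]
where $g$ has degree at most $d-1$ and, by Proposition~\ref{onlypureterms}, consists only of mixed terms. Assume toward a contradiction that no monomial of degree $d-1$ appears in $f$; then in fact $\deg g \leq d-2$.

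Next I would substitute $y = 1-x$ and examine the resulting univariate polynomial. Since $f(x,1-x) \equiv 1$, every coefficient of $x^k$ with $k \geq 1$ must vanish. The strategy is to compute the coefficient of $x^{d-1}$ and exhibit that it equals a nonzero integer. Expanding $(1-x)^d = \sum_{k=0}^d \binom{d}{k}(-1)^k x^k$, the $x^d$ terms of $x^d + (1-x)^d$ cancel because $d$ is odd, while the coefficient of $x^{d-1}$ is $(-1)^{d-1}\binom{d}{d-1} = d$ (again using that $d$ is odd, so $d-1$ is even). On the other hand $g(x,1-x)$ is a polynomial of degree at most $d-2$, so it contributes nothing to the $x^{d-1}$ coefficient of $f(x,1-x)$. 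Hence the coefficient of $x^{d-1}$ in $f(x,1-x)$ is exactly $d$, contradicting $f(x,1-x) \equiv 1$ since $d > 1$.

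There is no real obstacle once Lemma~\ref{topdegxdyd} is available; the proof is essentially a one-line binomial expansion. The only delicate point is checking that the $x^{d-1}$ contribution coming from $x^d + y^d$ is not accidentally cancelled, which is why the hypothesis that $d$ is odd is used twice: once to make the degree-$d$ contributions in $x$ cancel (so the $x^{d-1}$ coefficient is not perturbed by a surviving top-degree term) and once to fix the sign of the binomial coefficient.
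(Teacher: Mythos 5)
Your proof is correct, but it takes a different route from the paper's. The paper writes $f(x,y)-1=(x+y-1)q(x,y)$, decomposes $q$ into homogeneous parts, and observes that the absence of degree $d-1$ monomials forces $(x+y)q_{d-2}=q_{d-1}$, hence forces the top-degree part of $f$ to be divisible by $(x+y)^2$; this contradicts Lemma~\ref{topdegxdyd} because $x^d+y^d$ (for $d$ odd) is divisible by $x+y$ only once. You instead substitute $y=1-x$ directly and compute that the coefficient of $x^{d-1}$ in $x^d+(1-x)^d$ equals $d\neq 0$, while $g(x,1-x)$ has degree at most $d-2$ under the contradiction hypothesis and so cannot cancel it. Both arguments hinge on Lemma~\ref{topdegxdyd} and on the same underlying fact (the order of vanishing of $x^d+y^d$ along $x+y=0$ is exactly one, the cofactor summing to $d$ on that line), but yours avoids the quotient polynomial $q$ and its homogeneous decomposition entirely, making it more elementary and self-contained; the paper's version fits more naturally into the Newton-diagram machinery of \S~\ref{section:graphthry} that it reuses elsewhere. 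One minor remark: your appeal to Proposition~\ref{onlypureterms} is harmless but unnecessary --- all you need is that the degree $d-1$ part of $g$ vanishes, which is immediate from the contradiction hypothesis.
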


\begin{proof}
Write $f(x,y) -1 = (x+y-1) q(x,y)$ for some $q$.  Write $q = q_{d-1}+q_{d-2}+\cdots+q_0$ as the homogeneous decomposition of $q$.  If $f$ does not
have any monomials of degree $d-1$ then $(x+y)q_{d-2} - q_{d-1} = 0$.
Hence, the top degree terms of $f$
must be divisible by $(x+y)^2$.  By Lemma~\ref{topdegxdyd} the top degree
part of a sharp polynomial is $x^d+y^d$,
which is not divisible by $(x+y)^2$.
\end{proof}

Further analysis of the proof of~\cite{DKR} reveals the
following lemma, which further reduces the search space.

\begin{lemma} \label{dminus1unreachable}
Let $d > 1$ be an odd integer and let $f \in \sH(2,d)$ be sharp.
Then $f$ does not contain terms of the form
$x^jy^{d-1-j}$ for even $j$.
\end{lemma}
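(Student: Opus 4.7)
The proof extends the strategy of Lemma~\ref{dminus1} by a more delicate analysis of $q$ in the factorization $f-1 = (x+y-1)q$. By Lemma~\ref{topdegxdyd} and Proposition~\ref{onlypureterms}, I write $f = x^d + y^d + g(x,y)$ with $g$ a sum of mixed terms of total degree at most $d-1$, and decompose $q = q_{d-1}+\cdots+q_0$ homogeneously. Matching top-degree parts in $f = 1 + (x+y)q - q$ gives $x^d + y^d = (x+y)\,q_{d-1}$, and since $d$ is odd,
\[ q_{d-1}(x,y) = \sum_{k=0}^{d-1}(-1)^k x^k y^{d-1-k}. \]
This sign alternation on the top diagonal of $q$ is the key input.

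Next I exploit the relation $f_{d-1} + q_{d-1} = (x+y)\,q_{d-2}$, whose left-hand side must be divisible by $x+y$, so substituting $y=-x$ yields zero. Computing $q_{d-1}(x,-x) = d\,x^{d-1}$ and $f_{d-1}(x,-x) = x^{d-1}\sum_{i+j=d-1} a^f_{ij}(-1)^j$, I obtain
\[ \sum_{i+j=d-1} a^f_{ij}(-1)^j = -d. \]
Because $d-1$ is even, $(-1)^j = (-1)^i$ on this diagonal, so the identity rearranges into
\[ \sum_{i\text{ odd}} a^f_{i,d-1-i} \;=\; d + \sum_{i\text{ even}} a^f_{i,d-1-i}. \]
With $a^f_{ij} \ge 0$, this already shows the odd-$i$ sum exceeds the even-$i$ sum by exactly $d$; the lemma asserts the sharper statement that the even-$i$ sum vanishes outright.

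To reach this stronger conclusion I would revisit the graph-theoretic proof of $d \leq 2N-3$ in~\cite{DKR}, tracking parity of exponents. In that proof each monomial of $f$ becomes a vertex of a graph whose structural invariants saturate the inequality precisely when $N = (d+3)/2$. My plan is to retrace the counting argument while keeping track of the parity of the $x$-exponent on the top diagonal: each monomial $x^i y^{d-1-i}$ with $i$ even contributes to the balance above with the same sign $+1$ as the corresponding coefficient of $q_{d-1}$, and rebalancing the lower-diagonal divisibility equations after its inclusion forces the presence of an additional monomial outside the diagonal. Since the sharp count $(d+3)/2$ leaves no room for such an extra monomial, no even-$i$ monomial can appear on the $i+j = d-1$ diagonal. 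The main obstacle is making the parity-sensitive refinement of the~\cite{DKR} vertex/edge count precise---in particular, verifying that the required rebalancing monomial is genuinely new and not already present in the minimal support guaranteed by Lemma~\ref{dminus1}.
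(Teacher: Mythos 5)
Your opening computation is correct and is in fact the algebraic shadow of the structure the paper exploits: from $x^d+y^d=(x+y)q_{d-1}$ you get $q_{d-1}=\sum_{k}(-1)^k x^k y^{d-1-k}$, which is exactly the alternating $PNPN\ldots P$ pattern on the degree $d-1$ diagonal of the Newton diagram of $q$ that appears in the paper's argument. The identity $\sum_{i\ \mathrm{odd}} a^f_{i,d-1-i} = d + \sum_{i\ \mathrm{even}} a^f_{i,d-1-i}$ is also correct, but, as you yourself note, it only bounds the difference of the two sums and cannot force the even-indexed sum to vanish; a configuration with a positive even-indexed coefficient compensated by larger odd-indexed ones is perfectly consistent with it.

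The actual content of the lemma lies entirely in the step you defer: showing that the presence of a term $x^jy^{d-1-j}$ with $j$ even is incompatible with sharpness. Your proposal for that step is a plan, not a proof --- you describe retracing the count of \cite{DKR} with parity bookkeeping and explicitly flag as an open ``obstacle'' the verification that the forced extra monomial is genuinely new. That is precisely the point that needs an argument, so there is a genuine gap. For comparison, the paper closes it differently and quite quickly: sharpness means terms of $f$ correspond exactly to sinks of the diagram of $q$, and in the ancestor diagram $D_1$ the degree $d-1$ diagonal is $PNPN\ldots P$ with $P$'s at the even positions; those $P$ entries are not sinks, and converting such a $P$ to $0$ or $N$ (the only way to make that position a sink) creates a \emph{source}, i.e.\ forces a negative coefficient of degree $d$ in $f$, since the adjacent degree $d$ coefficient of $f$ equals the sum of two entries of $q$ one of which is already $N$. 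So the obstruction is not an extra monomial exceeding the count $(d+3)/2$, but a forced sign violation; if you want to complete your route, that is the mechanism to aim for rather than a refined vertex/edge count.
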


Before we prove Lemma~\ref{topdegxdyd} and Lemma~\ref{dminus1unreachable},
we need to set up the terminology
of~\cite{DKR} and restate some of their results.
First we have the following
characterization of homogeneous polynomials in $\sH(2,d)$.  More
general related results were proved in \cite{Rudin84} and \cite{D:book}.
For convenience of 
the reader we prove the following special case,
in the setting of that 
applies to this work.

\begin{prop} \label{homogcase}
Let $h \in \sH(2,d)$ be homogeneous.  Then $h(x,y) = (x+y)^d$.
\end{prop}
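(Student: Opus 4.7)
The plan is to exploit homogeneity to reduce the identity on the affine line $x+y=1$ to an identity on an open set, and then invoke the fact that two polynomials agreeing on an open set must coincide.

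First I would observe that since $h$ is homogeneous of degree $d$, we have $h(tx,ty) = t^d\, h(x,y)$ for every scalar $t$. Given any point $(x,y)$ with $x+y \neq 0$, set $t = x+y$; then $\bigl(\tfrac{x}{x+y},\tfrac{y}{x+y}\bigr)$ lies on the line $u+v=1$, so by the defining property of $\sH(2,d)$ we have $h\bigl(\tfrac{x}{x+y},\tfrac{y}{x+y}\bigr) = 1$. Applying homogeneity in reverse yields
\begin{equation*}
h(x,y) = (x+y)^d \, h\!\left(\tfrac{x}{x+y},\tfrac{y}{x+y}\right) = (x+y)^d
\end{equation*}
on the open set $\{(x,y) : x+y \neq 0\}$.

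Finally, I would note that $h(x,y)$ and $(x+y)^d$ are polynomials that agree on an open subset of $\R^2$, hence they agree identically. This closes the argument. No obstacle really arises here beyond being careful to exclude the line $x+y=0$ before taking the quotient, and I do not need any appeal to the positivity of the coefficients: the statement follows purely from homogeneity together with the interpolation condition on the affine line.
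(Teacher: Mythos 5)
Your proof is correct and follows essentially the same route as the paper: use homogeneity to propagate the identity from the line $x+y=1$ to an open set (the paper uses the first quadrant via scaling by $t\ge 0$, you use $\{x+y\neq 0\}$ via $t=x+y$), then conclude by the fact that polynomials agreeing on an open set coincide. Your observation that positivity of the coefficients is not needed is also consistent with the paper's argument.
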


\begin{proof}
Note that $(x+y)^d = h(x,y)$ when $x+y=1$.  Any point in the first quadrant
of $\R^2$ can be written as $(tx,yx)$ where $x+y=1$ an $t \geq 0$.
By homogeneity, $(tx+ty)^d = t^d(x+y)^d= t^d h(x,y) = h(tx,ty)$.
Two polynomials equal on an open set are equal everywhere.
\end{proof}

Multiplying any lower degree part of a polynomial in $\sH(2,d)$ by $(x+y)^k$
does
not get us out of the space.  Hence, for $f \in \sH(2,d)$ we write
the homogeneous decomposition $f = f_d + f_{d-1} + \cdots + f_0$.  Using
Proposition~\ref{homogcase} we get
\begin{equation} \label{eq:undoing}
(x+y)^d = f_d(x,y) + (x+y)f_{d-1} + (x+y)^2f_{d-2} + \cdots + (x+y)^d f_0 .
\end{equation}
Therefore, every polynomial in $\sH(2,d)$ is constructed by starting with
$(x+y)^d$, partitioning it into two parts and dividing one by $(x+y)$, then repeating the process.  This operation is called \emph{undoing}.

Write
\begin{equation} \label{diveq}
f(x,y) - 1 = (x+y-1) q(x,y) .
\end{equation}
We study the coefficients of $q$.  In particular we write the Newton diagram
for $q$ where we ignore the size of each coefficient and only write $P$, $N$
or $0$ for positive, negative or zero respectively.
For example, when $f = x^3+3xy+y^3$,
then $q = {y}^{2}+y-xy+{x}^{2}+x+1$ and the diagram
(including the corresponding monomials) is
\begin{equation} \label{eq:examplenewton}
\begin{bmatrix}
x^3 & \mathbf{0} & 0 & 0 & 0\\
x^2 & P & 0 & 0 & 0 \\
x   & P & \mathbf{N} & 0 & 0 \\
1   & P & P & P & \mathbf{0}\\
    & 1 & y & y^2 & y^3
\end{bmatrix} .
\end{equation}
We have highlighted the entries corresponding to terms of $f$.  
For every entry in the diagram we define
the 2 by 2 submatrix that includes the entry itself, the entry just
below, and the entry to the left.  If the submatrix is any of the
following, then we say the entry is a \emph{sink}.
\begin{equation}
\begin{bmatrix}
P & N \\
* & P
\end{bmatrix}
,
\begin{bmatrix}
0 & N \\
* & P
\end{bmatrix}
,
\begin{bmatrix}
P & N \\
* & 0
\end{bmatrix}
,
\begin{bmatrix}
0 & N \\
* & 0
\end{bmatrix}
,
\begin{bmatrix}
P & 0 \\
* & P
\end{bmatrix}
,
\begin{bmatrix}
0 & 0 \\
* & P
\end{bmatrix}
, \text{ or }
\begin{bmatrix}
P & 0 \\
* & 0
\end{bmatrix} .
\end{equation}
Each sink in the diagram for $q$ must correspond to a nonzero positive
term in $f$.  There may be more positive terms in $f$ than there are sinks, but
not the other way around.  In \eqref{eq:examplenewton}, the sinks are marked in
bold.
One can make the corresponding definition of a \emph{source}, which would
force a negative term.  There must therefore be at most one
(and in fact exactly one) source
corresponding to the $-1$ in $f(x,y)-1$.

From equation \eqref{eq:undoing} we see that the corresponding
diagram for any $f \in \sH(2,d)$ is obtained by starting with the diagram
for $(x+y)^d$ and successively changing $P$'s to $N$'s or $0$'s.  The diagram
for $(x+y)^3$ is
\begin{equation}
\begin{bmatrix}
\mathbf{0} & 0 & 0 & 0\\
P & \mathbf{0} & 0 & 0 \\
P & P & \mathbf{0} & 0 \\
P & P & P & \mathbf{0}
\end{bmatrix} .
\end{equation}
We remark that diagrams we obtain by changing $P$'s to $N$'s or $0$'s
need not correspond to polynomials in $\sH(2,d)$.

D'Angelo, Kos and Riehl prove that the minimum number of sinks one can
obtain by this procedure is exactly $\lceil \frac{d+3}{2} \rceil$.  It turns
out that there exist polynomials in $\sH(2,d)$
with precisely this many nonzero terms.  Thus
for a sharp polynomial in $\sH(2,d)$ the nonzero terms correspond
exactly to sinks in its Newton diagram.
We can thus easily prove the following version of Lemma~\ref{topdegxdyd}.
This proposition was essentially proved in~\cite{DKR} but not stated
explicitly.

\begin{prop}[D'Angelo-Kos-Riehl] \label{weakxkym}
Let $f \in \sH(2,d)$ be sharp.  Then for some $k,m \leq d$,
\begin{equation}
f(x,y) = x^k + y^m + \text{(mixed terms)} .
\end{equation}
\end{prop}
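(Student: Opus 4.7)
The plan is to analyze the leftmost column of the Newton diagram of $q$ (where $f - 1 = (x+y-1)q$) to show that at most one sink can lie there, and then to use the sharpness hypothesis to upgrade this to ``exactly one.''  Writing $s_a$ for the coefficient of $x^a$ in $q$, the identity $f = 1 + (x+y-1)q$ gives $f_{0,0} = 1 - s_0$ and $f_{k,0} = s_{k-1} - s_k$ for $1 \le k \le d$, while $\deg q \le d-1$ forces $s_d = 0$.  Nonnegativity of every $f_{k,0}$ then yields
\begin{equation}
1 \ge s_0 \ge s_1 \ge \cdots \ge s_{d-1} \ge s_d = 0,
\end{equation}
so $s$ is a nonincreasing, nonnegative sequence.

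I would then restrict the seven sink templates to positions in column $j=0$.  At any such position the ``left'' entry is the boundary $0$, and $s_a \ge 0$ rules out an $N$ anywhere in the column.  Exactly one template survives those sign constraints -- the one with current $=0$, left $=0$, below $=P$ -- so a sink in column $0$ corresponds precisely to an index $a \in \{1,\dots,d\}$ at which $s_a = 0$ and $s_{a-1} > 0$.  Monotonicity of $s$ permits at most one such transition, and the corner $(d,0)$ itself is never a sink because that would require $s_0 < 0$.

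For a sharp $f$ the nonzero coefficients of $f$ are in bijection with the sinks of the $q$-diagram.  A positive constant term would demand a sink at the corner $(d,0)$, contradicting the previous paragraph; hence $f_{0,0} = 0$ and $s_0 = 1$.  Together with $s_d = 0$, the chain above must then drop from positive to zero at a unique $k \in \{1,\dots,d\}$, and that transition produces the unique column-$0$ sink, giving the unique pure $x$-monomial $x^k$ of $f$ (with coefficient $s_{k-1} - s_k = 1$, as required by $f(1,0) = 1$).  A symmetric argument applied to the bottom row $i = d$ of the diagram (equivalently to the sequence $q_{0,b}$) yields the unique pure $y$-monomial $y^m$.

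The main obstacle I anticipate is the sign and position bookkeeping: verifying that among the seven sink templates precisely one survives the constraint $s_a \ge 0$ in column $j=0$, and that the corner $(d,0)$ -- which plays the role of the unique source of the diagram -- is handled separately from the interior positions.  Once those bits are pinned down, the rest of the argument is monotonicity of $s$ combined with the sinks-equal-terms characterization of sharp polynomials from~\cite{DKR}.
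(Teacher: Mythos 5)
Your argument is correct and follows essentially the same route as the paper's: both analyze the boundary row and column of the Newton diagram of $q$, show each is a run of positives followed by zeros (hence carries exactly one sink), and then invoke the sharpness bijection between sinks and terms together with $f(1,0)=f(0,1)=1$ to pin down the coefficients. Your telescoping identity $f_{k,0}=s_{k-1}-s_k$ is just an explicit unpacking of the paper's ``no source other than the one at the origin'' step, so the two proofs differ only in presentation.
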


\begin{proof}
Note that there must be exactly one source, hence the lower right entry 
in the diagram for $f$ must be a $P$.  In the bottom row, we note
that to only have sinks except for the one source, we must have
a row of some number of $P$'s and then all zeros.  That means that there is
exactly one sink on the bottom row.  Similarly there is exactly one sink
in the leftmost column.  Since $f$ is sharp, these sinks correspond
exactly to nonzero terms in $f$.  Thus there is at most one pure
term in $x$ and at most one pure term in $y$.  By plugging in $x=1$, $y=0$, and
vice versa, we get that the coefficient of both terms must be 1.
\end{proof}

In the procedure of undoing, we start with the diagram for $(x+y)^d$, which has
$d+1$ sinks along the main diagonal and we change $P$'s to $N$'s or $0$'s.
Of course we can change only the entries corresponding to terms of degree $d-1$
or less as the diagram corresponds to the $q$ in \eqref{diveq}, which is
of degree $d-1$.
We call a diagram $D'$ an \emph{ancestor} of $D$ if $D$ has less than
or equal number of nonzero entries ($P$'s or $N$'s) than has $D'$.

We note what can happen in this procedure to the sinks of the diagram
corresponding to $(x+y)^d$, which we call $D_h$.
The sinks may move leftward in rows, or downward in columns, in
which case the number of sinks is unchanged.  Sinks can also be created,
two sinks can coalesce into one or no sinks, or a sink can disappear.  The main
idea of the proof is essentially the following result which we state as a
lemma, and which is proved in~\cite{DKR}.  By a \emph{procedure} of getting
to a diagram $D$ of $f$ we mean a finite sequence of diagrams
$D_h = D_0 \to D_1 \to \cdots \to D_m = D$ such that $D_j$ is an ancestor of
$D_k$ whenever $j < k$.  All diagrams are ancestors of $D$ and $D_h$ is the
ancestor of all the diagrams in the sequence.

\begin{lemma}[D'Angelo-Kos-Riehl]
Let $D$ be a diagram corresponding to a sharp $f \in \sH(2,d)$.
Then there exists a procedure (as described above) such that
each diagram in the procedure has a unique source at the origin.
Furthermore, all coalescence happens on the diagonal corresponding to
terms of degree $d-1$.
In particular, there exists an ancestor diagram $D_1$ with the
same number of sinks as $D$, and with sinks corresponding only to terms of
degree $d-1$ and $d$.
\end{lemma}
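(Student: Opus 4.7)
The plan is to decompose an arbitrary procedure realizing $D$ into two consecutive phases: a reduction phase operating only on the degree-$d$ and degree-$(d-1)$ antidiagonals that absorbs all sink coalescence, followed by a descent phase that moves individual sinks to lower degrees without further merging. The starting diagram $D_h$ of $(x+y)^d$ has $d+1$ sinks on the main antidiagonal, while the sharp target $D$ has only $N=(d+3)/2$ sinks; the deficit of $(d-1)/2$ sinks must be absorbed by coalescence events, and the aim is to schedule all of them on the $(d-1)$ antidiagonal.

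First, I would establish a commutation and reordering lemma for the undoing procedure. An atomic undoing step takes a portion $r$ of $f_k$ divisible by $(x+y)$ and replaces $f_k$ by $f_k-r$ together with $f_{k-1}$ by $f_{k-1}+r/(x+y)$, affecting only the $k$th and $(k-1)$th antidiagonals of the diagram. Atomic steps acting on disjoint degree pairs $\{k,k-1\}$ and $\{j,j-1\}$ with $|k-j|\ge 2$ commute trivially, and consecutive steps on overlapping pairs can be reordered by subdividing $r$ into pieces that each commute past an unrelated neighbour. Through iterated swaps, every procedure can be rearranged so that all steps touching the pair $\{d,d-1\}$ are performed first, producing an intermediate diagram $D_1$ supported only on degrees $d-1$ and $d$; then all steps touching $\{d-1,d-2\}$, and so on downward.

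Second, I would construct the reduction phase explicitly. Starting from $D_h$ with its $d+1$ sinks at the positions $x^iy^{d-i}$ for $0\le i\le d$, the endpoint sinks at $x^d$ and $y^d$ are retained throughout (Proposition~\ref{weakxkym} forces a unique pure term in each variable). The remaining $d-1$ interior sinks are grouped into $(d-1)/2$ adjacent pairs, with the pairing determined by the sink positions of $D$ on degrees $d-1$ and $d$; each pair is merged by a single undoing step on degree $d$ that pushes both sinks to a common position on degree $d-1$, producing a coalescence event there. After $(d-1)/2$ such merges we obtain $D_1$ with $2$ sinks on degree $d$ and $(d-1)/2$ sinks on degree $d-1$, totalling $(d+3)/2$, which matches the final count.

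Third, I would carry out the descent from $D_1$ to $D$. By the reordering argument only steps acting on degree pairs $\{k,k-1\}$ with $k\le d-1$ remain, and these are realised as a sequence of moves that each relocate a single sink downward without creating or destroying any. Since $D$ has exactly the minimum sink count $(d+3)/2$, any additional coalescence during descent would need to be compensated by spontaneous sink creation, which we rule out by construction of the descent. Throughout the procedure the polynomial remains in $\sH(2,d)$ with no constant term (the degree-$0$ component is never touched), so the unique source at the origin is preserved at every stage. The main obstacle is making the commutation argument in the first step fully rigorous for overlapping degree pairs, and verifying that the adjacent-pair matching chosen in the reduction phase can always be steered into the exact sink pattern of $D$—this amounts to a combinatorial compatibility lemma asserting that every sharp $D$ admits such a factoring of its ancestry through a $D_1$ of the prescribed form.
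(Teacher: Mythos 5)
Your proposal runs the construction forward from $D_h$, whereas the paper works in reverse: it starts from $D$ and changes $0$'s and $N$'s back to $P$'s so that sinks move up and to the right without increasing in number, until they reach the degree $d-1$ diagonal or disappear; the resulting diagram is $D_1$. More importantly, your argument has a genuine gap at its core. The entire content of the lemma is that the coalescence events can be scheduled on the degree $d-1$ diagonal and that the resulting $D_1$ still descends to the given sharp $D$; you defer exactly this to an unproven ``combinatorial compatibility lemma'' at the end, and the commutation/reordering step that is supposed to deliver it is acknowledged as ``the main obstacle.'' Reordering atomic undoing steps does preserve the final diagram, but it does not by itself show that the merging of sinks can be localized to the top diagonal, nor that no new sinks are created and later destroyed at lower degrees during the descent phase (the paper notes explicitly that sinks can be created as well as coalesce or vanish); asserting that spontaneous creation is ``ruled out by construction'' without exhibiting the construction is not a proof.

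Two further problems. First, your reduction phase assumes the endpoint sinks $x^d$ and $y^d$ are retained throughout and that exactly $(d-1)/2$ adjacent pairs of interior sinks merge. Proposition~\ref{weakxkym} gives only one pure term in each variable, not that it sits in degree $d$; the fact that the degree-$d$ sinks of a sharp polynomial are exactly $x^d$ and $y^d$ is Lemma~\ref{topdegxdyd}, which is proved \emph{using} the present lemma, so invoking it here is circular. It is also false for even $d$, while the lemma as stated applies to any sharp $f \in \sH(2,d)$. Second, intermediate diagrams in a procedure need not correspond to polynomials in $\sH(2,d)$ at all (the paper remarks on this explicitly), so the claim that ``the polynomial remains in $\sH(2,d)$'' throughout, from which you derive the unique source at the origin, does not stand; the unique source must be maintained by a careful choice of procedure, which is again part of what must be proved.
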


\begin{proof}
For completeness we sketch one possible proof of this lemma, which is
slightly different from~\cite{DKR}.  The tedious details
require checking a finite number of cases and are left to the reader.
We work
in reverse, that is, we start with $D$ and work towards $D_1$.  Take $D$
and notice that we can change $0$'s to $P$'s or $N$'s without increasing the
number of sinks.  This can be done in such a way that
sinks in move up and to the right or disappear.  It is not hard to check now
that by setting certain $N$'s to $P$'s we can have sinks move up and to the
right until they reach the diagonal corresponding to degree $d-1$ or
disappear.  This
diagram is the $D_1$ we seek.
\end{proof}

We are now ready to prove Lemma~\ref{topdegxdyd}.  Intuitively the idea of
the proof is that if all the coalescence happens on the diagonal
corresponding to terms of degree $d-1$ then
the coalescence of sinks must happen in
matched pairs.  An extra sink is left over on each side of a row of such
matched pairs.  To minimize the number of sinks, there must be at most two
such extra left over sinks and they must be the ones corresponding to $x^d$
and $y^d$.

\begin{proof}[Proof of Lemma~\ref{topdegxdyd}]
Suppose $f\in \sH(2,d)$ is sharp and $d$ is odd.
We now know that during the procedure of getting
the diagram for $f$, we have passed through a
diagram $D_1$ with sinks only on the diagonals
corresponding to degrees $d-1$ and $d$, and having the exact same number of
sinks as the number of nonzero terms in $f$.

Let us start with $D_h$.
It is easy to see that except for the first and last entry on the degree $d-1$
diagonal, maximum coalescence happens when we change every other $P$ on the
diagonal to $N$.  Changing to $0$ does not remove the sinks of degree $d$ and
hence does not create coalescence.  Changing the first or the last entry to $N$
does not create any coalescence.  Since $d$ is odd, we note that the degree
$d-1$ diagonal of the diagram consists of $PNPNP\ldots PNP$.  Any other
arrangement has too many sinks.

The $D_1$ diagram has sinks for $x^d$ and $y^d$ and no other sinks of degree
$d$.  We can no longer create or lose any sinks on the diagonal of degree
$d$.  We know
that as $f$ is sharp, sinks correspond exactly to terms in $f$.  We know
that there are at least two terms of degree $d$.
Applying Proposition~\ref{weakxkym} gives the result.
\end{proof}

\begin{proof}[Proof of Lemma~\ref{dminus1unreachable}]
We analyze the proof further.  We again find the diagram $D_1$ that
has $PNPNP \ldots PNP$ as the degree $d-1$ diagonal.
If we create a sink by changing one of the $P$'s to a zero or an $N$
we note that we have created a source, which is not allowed.
\end{proof}


\section{Finding sharp polynomials as a linear problem} \label{section:linprob}

We fix $d$ and we treat the coefficients of polynomials as variables.
Hence, we can treat the space of polynomials of degree $d$ or less as $\R^K$
for some large $K$.
Suppose
\begin{equation}
p(x,y) = \sum_{j+k \leq d} c_{j,k} x^j y^k .
\end{equation}
The condition that $p(x,y) = 1$ on $x+y$ is equivalent to 
\begin{equation}
p(x,1-x) = 1 .
\end{equation}
That means all the non-constant coefficients of $h(x) = p(x,1-x)$
must be zero and the constant coefficient must be 1.
We get a linear system of $d$ equations in $K$ variables and one
affine equation.  If we instead let $p(x,1-x) = c$ and let $c$ be a variable,
then we get a linear system of $d+1$ equations in $K+1$ variables.  Whenever
we find a solution $p$ to $p(x,1-x) = c$, $c \not= 0$, we get a solution to
$p(x,1-x) = 1$ by rescaling.

Now that we know that $p \in \sH(2,d)$ are solutions of a linear system
of equations, we prove some useful propositions.
We will need the following proposition from~\cite{Lebl:thesis}, which we
reprove here as the idea of the proof is important in the next section.
A generalized version of this proposition was given in~\cite{DL:families}.

\begin{prop} \label{nofamily}
If there exists a continuous map $t \mapsto p_t \in \sH(2,d)$
(a one parameter family)
and
further that $N(p_t)$ is constant for $t$ in some open interval $I$,
then $p_t$ is not sharp for $t \in I$.
\end{prop}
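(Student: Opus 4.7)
The plan is to fix some $t_0 \in I$, assume for contradiction that $p_{t_0}$ is sharp, and then exhibit a polynomial in $\sH(2,d)$ with strictly fewer than $N(p_{t_0})$ monomials. First I would shrink $I$ to a sub-interval $I' \ni t_0$ on which the support $S := \supp(p_t)$ is constant: by continuity each coefficient that is positive at $t_0$ stays positive for $t$ near $t_0$, so $S$ cannot shrink, and because $N(p_t) = |S|$ is constant no new monomial can enter either. Since the family must genuinely vary on $I'$ (otherwise $p_t$ is locally constant and there is nothing to argue against), I can pick $t_1 \in I'$ with $p_{t_1} \neq p_{t_0}$ and form the secant $w := p_{t_1} - p_{t_0}$, a nonzero polynomial with support in $S$ satisfying $w(x,1-x) \equiv 0$ (since both endpoints lie in $\sI$).

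Evaluating at $(1/2,1/2)$ gives $\sum_\gamma w_\gamma (1/2)^{|\gamma|} = 0$; if every $w_\gamma$ were nonnegative each term would have to vanish individually, contradicting $w \neq 0$, so $w$ contains coefficients of both signs. Now I would examine the affine line $L(s) := p_{t_0} + s w \in \sI$ and let $[a,b]$ be the maximal closed interval around $0$ on which every coefficient of $L(s)$ is nonnegative. Because $w$ has coefficients of both signs while $L(0)=p_{t_0}$ and $L(1)=p_{t_1}$ have all coefficients strictly positive on $S$, the endpoints are finite with $a<0<1<b$. At $s=a$ and $s=b$ some coefficient drops to zero, so $\supp L(a)$ and $\supp L(b)$ are proper subsets of $S$ of size at most $N-1$, and both $L(a), L(b)$ lie in $\sI$ with nonnegative coefficients.

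The main obstacle, and the real crux of the proof, is to rule out the possibility that both $L(a)$ and $L(b)$ have degree strictly less than $d$, because a witness to the minimality violation must live inside $\sH(2,d)$ itself. To overcome this I would inspect the top-degree monomials: if $\deg L(b)<d$ then $c_\gamma(p_{t_0}) + b\,w_\gamma = 0$ for every top-degree $\gamma\in S$, forcing $w_\gamma = -c_\gamma(p_{t_0})/b < 0$; symmetrically, $\deg L(a)<d$ forces $w_\gamma = -c_\gamma(p_{t_0})/a > 0$ for those very same $\gamma$. Since $p_{t_0}\in\sH(2,d)$ has degree exactly $d$, the set $S$ contains at least one top-degree monomial, and the two sign conditions on $w_\gamma$ are incompatible. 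Hence at least one of $L(a), L(b)$ lies in $\sH(2,d)$ with at most $N-1$ monomials, contradicting the minimality of $N(p_{t_0})$ and completing the argument.
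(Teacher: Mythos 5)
Your proof is correct and follows essentially the same route as the paper's: replace the family by the secant line through two members with common support, slide to the endpoints of the maximal interval of nonnegativity (where the support strictly shrinks), and observe that both endpoints cannot simultaneously drop below degree $d$ because the original polynomial is a convex combination of them. You fill in details the paper leaves implicit --- the boundedness of the interval via the mixed signs of $w$, and the explicit sign computation on the top-degree coefficients --- but the underlying argument is the same.
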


\begin{proof}
$p \in \sH(2,d)$ are solutions of a linear system, hence if there is
a family, then there is a straight line with the same property.
We pick two polynomials $\varphi$ and $\psi$ such that
$p_t := t\varphi+(1-t)\psi \in \sH(2,d)$ and the number of terms in $p_t$
is constant some small interval $I$.  We restrict our attention
to a closed interval $J$ where $p_t$ has nonnegative 
coefficients.  It is not hard to see that $J$ must be bounded and we could
by rescaling assume that $J=[0,1]$.  Further, we note that the number of
nonzero coefficients of $p_0$ and $p_1$ must be smaller than 
the number of coefficients in $p_t$ for $t \in (0,1)$.
Obviously $I \subset (0,1)$.  It remains to show that $p_0$ or $p_1$
are in $\sH(2,d)$.  They could conceivably be of lower degree than $d$,
but they cannot
both be such since $p_t$ is a convex combination of them.
\end{proof}

We define a \emph{support} as a subset of the set of
monomials of degree at most $d$.  The support of a polynomial
$p$ is the set of monomials with nonzero coefficients.

\begin{prop} \label{onesig}
No two sharp polynomials in $\sH(2,d)$ have the same support.
In particular, there can be at most finitely many sharp polynomials
in $\sH(2,d)$.
\end{prop}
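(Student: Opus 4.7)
The plan is to adapt the argument from Proposition~\ref{nofamily}. Suppose for contradiction that $p\ne q$ are two sharp polynomials in $\sH(2,d)$ sharing a common support $S$, so $|S|=N(p)=N(q)$. The strategy is to consider the affine line $r(t)=(1-t)p+tq$, push $t$ outside $[0,1]$ until the first coefficient vanishes, and thereby produce a polynomial in $\sH(2,d)$ with strictly fewer terms than $|S|$.

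Linearity of the defining condition gives $r(t)(x,1-x)=1$ for every real $t$, and for $t\in[0,1]$ the polynomial $r(t)$ is a convex combination of $p$ and $q$, so $\supp r(t)=S$. Let $[a,b]\supseteq[0,1]$ be the maximal closed interval on which all coefficients of $r(t)$ are nonnegative. I would first show $[a,b]$ is bounded: if $b=+\infty$, then nonnegativity of $(1-t)p_{j,k}+t\,q_{j,k}$ for all $t\ge 1$ forces $q_{j,k}\ge p_{j,k}$ for every $(j,k)$, so $q-p$ would be a nonzero polynomial with nonnegative coefficients vanishing on $x+y=1$; evaluating at $(1/2,1/2)$ would then force every coefficient of $q-p$ to be zero, a contradiction. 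The case $a=-\infty$ is symmetric.

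At each endpoint, some coefficient that was positive throughout $[0,1]$ has just become zero, so $\supp r(a),\ \supp r(b)\subsetneq S$. The one remaining worry---and the main obstacle---is that $r(a)$ or $r(b)$ could have degree strictly smaller than $d$ and therefore fail to lie in $\sH(2,d)$. To rule this out at one of the two endpoints, I would invoke the convex combination identity
\begin{equation*}
p=r(0)=\tfrac{b}{b-a}\,r(a)+\tfrac{-a}{b-a}\,r(b),
\end{equation*}
which is valid because $a\le 0\le b$. Since $p$ has degree exactly $d$, some degree-$d$ monomial has a positive coefficient in $p$, and the convex combination forces that same coefficient to be positive in $r(a)$ or in $r(b)$; the corresponding endpoint then lies in $\sH(2,d)$ and has support strictly smaller than $S$, contradicting the sharpness of $p$.

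The ``in particular'' finiteness statement follows immediately: the set of monomials of degree $\leq d$ is finite, hence has only finitely many subsets, and by the above each subset is the support of at most one sharp polynomial in $\sH(2,d)$. The only subtlety in the proof is the possible degree drop at an endpoint, exactly as in Proposition~\ref{nofamily}, and the convex-combination identity handles it without any further ingredient.
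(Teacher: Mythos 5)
Your proof is correct and follows essentially the same route as the paper: the paper forms the segment $tp+(1-t)q$ and immediately invokes Proposition~\ref{nofamily}, whose proof is exactly your argument (maximal interval of nonnegativity, boundedness via the coefficientwise comparison, support drop at the endpoints, and the convex-combination identity to rule out a degree drop at both ends). You have simply inlined that lemma's proof with full detail; the finiteness conclusion is handled identically.
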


\begin{proof}
If $p$ and $q$ have the same support, take the combination
$tp+(1-t)q$, which is a one parameter family of same support.
By Proposition~\ref{nofamily} either $p=q$ or neither $p$ nor $q$ can be
sharp.
\end{proof}

\begin{cor}
The coefficients of sharp polynomials are rational.
\end{cor}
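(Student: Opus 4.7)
The plan is to exhibit each sharp polynomial as the unique solution of a linear system with integer coefficients; the rationality of its coefficients then follows immediately from Cramer's rule (or Gaussian elimination over $\mathbb{Q}$).

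First, fix a sharp $p \in \sH(2,d)$ and let $S$ denote its support. Consider the affine subspace $V$ consisting of all real polynomials whose support is contained in $S$ and which satisfy $q(x,1-x) = 1$. As explained in Section~\ref{section:linprob}, expanding $q(x,1-x) = 1$ and equating coefficients of powers of $x$ yields a linear system whose matrix entries are binomial coefficients, hence integers. Thus $V$ is cut out of $\R^{|S|}$ by a linear system with rational (in fact integer) coefficients.

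The heart of the argument, and the step I would expect to be the main obstacle, is to show $V = \{p\}$. Suppose for contradiction that $V$ contains another point $q \neq p$, and consider the segment $p_t := (1-t)p + tq$. At $t = 0$ every coefficient of $p_t$ on $S$ is strictly positive, while coefficients outside $S$ vanish for both $p$ and $q$; hence by continuity there is an open interval $I \ni 0$ on which $p_t \in \sH(2,d)$ with support exactly $S$, so $N(p_t) = |S|$ is constant on $I$. Proposition~\ref{nofamily} then asserts that $p_t$ is not sharp for any $t \in I$, contradicting the sharpness of $p = p_0$. (Alternatively, one could appeal directly to Proposition~\ref{onesig}, since the $p_t$ for small $t \neq 0$ would be additional elements of $\sH(2,d)$ with support $S$; this shows why the obstacle is really just a rephrasing of results already in hand.)

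Once $V = \{p\}$ is established, $p$ is the unique solution of a linear system with integer coefficients, and a unique solution of such a system necessarily has rational coordinates. Hence the coefficients of every sharp polynomial are rational.
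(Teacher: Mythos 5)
Your proof is correct. The paper itself omits the proof of this corollary entirely (``We omit the proof as we never use this result''), but it places the statement immediately after Proposition~\ref{onesig}, and your argument is exactly the intended one: the support of a sharp polynomial determines it as the \emph{unique} solution of the integer linear system coming from $p(x,1-x)=1$ (uniqueness via the segment argument and Proposition~\ref{nofamily}, equivalently Proposition~\ref{onesig}), and a uniquely solvable rational linear system has a rational solution. So you have supplied, correctly, precisely the proof the authors had in mind and chose not to write down.
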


We omit the proof as we never use this result.


\section{Linear algebra method} \label{section:linalgmeth}

For $j,k$ satisfying 
$0 \leq j+k \leq d-1$,
we define 
polynomials 
\begin{equation} \label{eqbasis}
b_{j,k}(x,y) := x^jy^k - x^jy^k(x+y)^{d-j-k} .
\end{equation}
Each equals $0$ on the line $x+y = 1$, and they are linearly
independent.  By counting
dimensions it is not hard to see that the polynomials
\eqref{eqbasis} together with $(x+y)^d$ span the space
of all polynomials that are constant on the line $x+y = 1$.
\eqref{eqbasis} helps us construct the actual system of linear equations
to find sharp polynomials in $\sH(2,d)$ in a relatively simple way.

We decompose $p \in \sH(2,d)$
using the basis \eqref{eqbasis} together with $(x+y)^d$.
\begin{equation}
p(x,y) = (x+y)^d + \sum_{j,k} c_{j,k} b_{j,k} (x,y) .
\end{equation}
We note that $c_{j,k}$ are precisely the coefficients of $p$ and we also
note that since $p(x,y) = 1$ on $(x+y) = 1$, then the coefficient of
$(x+y)^d$ in the decomposition must be 1.  Hence the coefficients of
$p$ degree $d$ are affine functions of the coefficients of degree $d-1$ or
less.  If we treat the coefficient of $(x+y)^d$ (and thus the
value of $p$ on the line $x+y=1$) as a variable we find that
the coefficients of $p$ are linear functions.

We will always assume that $c_{0,0} = 0$ even if looking for
nonsharp polynomials in $\sH(2,d)$.
If $p(0,0) \not= 0$ then $\frac{p(x,y) - p(0,0)}{1-p(0,0)} \in \sH(2,d)$.
If we are only looking for sharp polynomials,
then $c_{0,0}$ will always be zero.

We construct a matrix $A$ with each column corresponding to
one $b_{j,k}$ and one column corresponding to $(x+y)^d$.  Each row 
represents one monomial of degree $d$.  This matrix when applied
to the vector $(\ldots,c_{j,k},\ldots,1)^t$ produces a vector of
the degree $d$ coefficients.  For illustration, suppose that $d=3$
and we order our monomials of degree $2$ or less as
$(x,y,x^2,xy,y^2)$ and order the monomials of
degree $3$ as $(x^3,x^2y,xy^2,y^3)$.  Then
\begin{equation}
A =
\begin{bmatrix}
-1 &  0 & -1 &  0 &  0 & 1 \\
-2 & -1 & -1 & -1 &  0 & 3 \\
-1 & -2 &  0 & -1 & -1 & 3 \\
 0 & -1 &  0 &  0 & -1 & 1
\end{bmatrix} .
\end{equation}
Now $A (c_{10},c_{01},c_{20},c_{11},c_{02},1)^t =
(c_{30},c_{21},c_{12},c_{03})^t$.

We need an algorithm to test if a given support
is a support of a sharp polynomial in $\sH(2,d)$.  
We divide the support into the degree $d$ part and the
lower degree part.  We pick all the monomials of degree $d-1$
or less in the support and pick out the corresponding columns in the 
matrix $A$, plus we always take the column corresponding to $(x+y)^d$.
Then we pick out the rows corresponding to the monomials
that do not appear in the support.  We get a submatrix $A'$
and compute its nullspace.  If $A'$ has empty nullspace, no polynomial
with such a support can vanish on the line $x+y=1$.

Any vector in the nullspace of $A'$ represents a set of coefficients of
degree $d-1$ or less in a polynomial that vanishes on $x+y=1$.
We apply these coefficients to $A$ to obtain the degree $d$
coefficients.
The coefficient of
$(x+y)^d$ must not be zero if all the coefficients of the polynomial
are to be positive,
as a polynomial that is zero on $x+y=1$ must have coefficients
of both signs.

We claim that the nullspace must be of dimension exactly 1 if it corresponds
to a sharp polynomial in $\sH(2,d)$.
If the dimension is more than one we would obtain 
a family of polynomials with the given support
by Proposition~\ref{nofamily} the polynomials cannot be sharp.

Therefore we compute the nullspace of $A'$.  If it is of any other
dimension than 1, we are done.  If it is of dimension 1, apply the
corresponding vector to $A$ and test all coefficients for being nonnegative.

As a simplification
we note that except for the last column, $A$ (and hence $A'$) consists of
nonpositive numbers, and the last column is positive.
Hence if
$A'$ contains a row of the form $(0,0,\ldots,0,c)$ for some constant $c$,
the nullspace cannot contain a nonzero vector with only nonnegative entries.

As an example we take the $A$ for degree 3 as given above.  Suppose that
we wish to test the support $xy,x^3,y^3$, that is
\begin{equation}
A' =
\begin{bmatrix}
-1 & 3 \\
-1 & 3 \\
\end{bmatrix} .
\end{equation}
The nullspace of $A'$ is exactly one dimensional and
$(3,1)^t$ spans this space.  Therefore $p(x,y) = (x+y)^3 + 3 b_{11}$ or 
$p(x,y) = 3xy+x^3+y^3$.  Further computation shows that $3xy+x^3+y^3$ is the
only sharp polynomial in $\sH(2,3)$.

When searching for sharp polynomials we apply the following
simplifications.

\begin{enumerate}[(i)]
\item
When the degree $d$ is odd,
the terms of degree $d$ are precisely the terms $x^d+y^d$.
See Lemma~\ref{topdegxdyd}.
\item
Exactly two pure terms occur.  See Proposition~\ref{weakxkym}.
\item
When the degree $d$ is odd,
at least one term of degree $d-1$ appears.
See Lemma~\ref{dminus1}.
\item
When degree $d$ is odd,
terms of the form $x^jy^{d-1-j}$ do not appear for even~$j$.
See Lemma~\ref{dminus1unreachable}.
\item \label{item:noadj}
From any two terms of the form $x^ky^{m+1}$ and $x^{k+1}y^m$,
only one occurs.  See Remark~\ref{noadjrmk} below.
\end{enumerate}

\begin{remark} \label{noadjrmk}
Suppose our polynomial contains
$c_{k,m+1} x^ky^{m+1} + c_{k+1,m} x^{k+1}y^m$, for nonzero $c_{k,m+1}$
and $c_{k+1,m}$. 
Assume without loss of generality that
$c_{k,m+1} < c_{k+1,m}$.  Then
$c_{k,m+1} x^ky^{m+1} + c_{k+1,m} x^{k+1}y^m
= c_{k,m+1} (x+y) x^ky^m + (c_{k+1,m}-c_{k,m+1}) x^{k+1}y^m$.  We can
replace $x+y$ with 1, to obtain a new nonequivalent sharp polynomial.
Once we have found
the second one we would have found the first as well.
No sharp polynomials with this configuration of monomials
have been found so far.
If Proposition~\ref{familyest} below is true for all degrees
then no such sharp polynomials actually exist.  Let $f$ be the polynomial
containing
two monomials as in \eqref{item:noadj}, and $g$ be the polynomial obtained by
dividing out an $(x+y)$.  We would obtain a contradiction by
considering $(1-t)f + tg$.
\end{remark}

The algorithm in this section 
has been implemented using the Genius
software version 1.0.2~\cite{Genius}.  Parts were also implemented in plain C
using the GMP library \cite{GMP}.
To reduce computations using large integer arithmetic, we used
modulo arithmetic first to check if the matrix $A'$ is nonsingular.
A very small prime, $p=19$, was sufficient to eliminate vast
majority of cases.

The tests were run on a recent Intel 3GHz CPU.  For $d=11$
the time used was $0.18$ seconds, for $d=13$ the time was $17$ seconds,
for $d=15$ the time was $33$ minutes,
and finally for $d=17$ the time was $77$ hours.
From these timings it appears that the complexity of this method
grows faster than the mixed linear programming method
described in \S~\ref{section:linprog}.  However, at least with the
current implementations, the method of this section is 
faster for small degrees.

We can also use this method to find even nonsharp polynomials.  In this case
we compute the nullspace, which is now possibly more than one
dimensional.  It is no longer easy to find the subspace generated
by nonnegative vectors.  We get many families of polynomials
that have negative coefficients.
A simple heuristic can eliminate most such families.
The rest are easy to sort through by hand.

In \cite{DL:families},
D'Angelo and the first author proved
the following theorem.  By a $k$-dimensional family of polynomials
we mean a $k$-dimensional polytope in the parameter space.  Recall that
$N(f)$ is the number of distinct monomials of $f$.

\begin{thm}[D'Angelo-Lebl]
Let $\sF \subset \sH(2,d)$ be a $k$-dimensional family,
then for any $f \in \sF$
\begin{equation} \label{unsharpfamilybnd}
d \leq 2\Big(N(f)-k\Big)-3 .
\end{equation}
\end{thm}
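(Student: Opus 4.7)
The plan is to generalize the boundary-walk argument from Proposition~\ref{nofamily} to higher-dimensional families and then reduce everything to the original D'Angelo--Kos--Riehl bound $d \leq 2N - 3$. The key observation is that $\sF$, being a $k$-dimensional polytope contained in $\sH(2,d)$, is carved out of its own $k$-dimensional affine hull inside $\sI$ by the coefficient-nonnegativity inequalities $c_{j,\ell} \geq 0$.

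First, I would pick $f$ in the relative interior of $\sF$. At such an $f$, no nonnegativity constraint is active except for those forced on the entire affine hull of $\sF$; consequently, $N(f)$ equals the number of monomials that can appear with a strictly positive coefficient somewhere in $\sF$.

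Second, I would walk from $f$ to a vertex $g$ of $\sF$. By standard polytope theory, a vertex of a $k$-dimensional polytope is characterized by having at least $k$ linearly independent nonnegativity constraints become active beyond those already forced by the affine hull. Hence at least $k$ distinct monomials that are present in $f$ vanish in $g$, giving
\begin{equation}
N(g) \leq N(f) - k .
\end{equation}
Since $\sF$ is closed as a polytope and is contained in $\sH(2,d)$, the vertex $g$ still has degree exactly $d$, so $g \in \sH(2,d)$. Applying the sharp bound $d \leq 2N(g) - 3$ of~\cite{DKR} to $g$ then yields
\begin{equation}
d \leq 2N(g) - 3 \leq 2\bigl( N(f) - k \bigr) - 3 ,
\end{equation}
which is \eqref{unsharpfamilybnd}.

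The main obstacle is the precise justification of the polytope-theoretic fact used in the second step, namely that passing from the relative interior to a vertex of a $k$-dimensional polytope activates at least $k$ linearly independent nonnegativity constraints, each corresponding to a \emph{distinct} monomial whose coefficient was strictly positive at $f$. This is standard, but must be phrased carefully to separate ``constraints already forced by the affine hull of $\sF$'' from ``newly active constraints at the vertex,'' and to confirm that the $c_{j,\ell}$ serve as independent coordinates on the ambient coefficient space so that distinct active inequalities correspond to distinct monomials. A secondary worry---that the vertex $g$ might drop degree and thereby escape $\sH(2,d)$---is eliminated by the closed-polytope hypothesis $\sF \subset \sH(2,d)$, so no extra machinery (such as reinjecting a multiple of $(x+y)^d$ to boost the degree back up) is needed.
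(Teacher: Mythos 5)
The paper does not actually prove this theorem; it is imported from the cited D'Angelo--Lebl paper on families, so I can only judge your argument on its own terms. There is a genuine gap at its center. Your second step rests on the claim that $\sF$ ``is carved out of its own $k$-dimensional affine hull by the coefficient-nonnegativity inequalities,'' so that at a vertex of $\sF$ at least $k$ of the constraints $c_{j,\ell}\geq 0$ become newly active. But the paper defines a $k$-dimensional family merely as a $k$-dimensional polytope contained in $\sH(2,d)$: such a polytope may be a small simplex lying entirely in the region where every relevant coefficient is strictly positive, in which case its vertices are determined by whatever inequalities happen to define $\sF$, not by the nonnegativity constraints, and no coefficient vanishes there, so $N(g)=N(f)$ and the step $N(g)\leq N(f)-k$ fails. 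The natural repair is to replace $\sF$ by $C$, the intersection of the affine hull of $\sF$ with the nonnegative coefficient orthant; $C$ is a $k$-dimensional polytope (bounded, since $p(\tfrac12,\tfrac12)=1$ caps every coefficient by $2^d$), every vertex of $C$ does have $k$ linearly independent active constraints of the form $c_{j,\ell}=0$, and each of these kills a monomial that is strictly positive at your relative-interior point $f$, giving $N(g)\leq N(f)-k$. However, this repair destroys your answer to your own ``secondary worry'': $C$ is \emph{not} contained in $\sH(2,d)$, and a vertex of $C$ genuinely can drop degree (the affine hull of a family in $\sH(2,d)$ can meet the locus where all degree-$d$ coefficients vanish; compare the segment of $t(x+y)+(1-t)(x+y)^2$). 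You need one further observation to close this: the degree-$d$ coefficients are affine functions on the affine hull, and if they all vanished at every vertex of $C$ they would vanish on $C=\operatorname{conv}(\text{vertices})$, contradicting $\deg f=d$; hence some vertex $g$ of $C$ retains degree exactly $d$, lies in $\sH(2,d)$, and still satisfies $N(g)\leq N(f)-k$, after which $d\leq 2N(g)-3$ finishes the proof.

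A secondary point: you prove the bound only for $f$ in the relative interior of $\sF$, while the statement reads ``for any $f\in\sF$.'' Taken literally the statement fails at relative-boundary points --- the segment joining $(x+y)^2$ to $x+xy+y^2$ is a $1$-dimensional polytope in $\sH(2,2)$ containing the endpoint $(x+y)^2$ with $N=3$, and $2\leq 2(3-1)-3$ is false --- so restricting to relative-interior $f$ (equivalently, to $f$ realizing the generic support of the family) is the sensible reading, but you should state explicitly that this is the version being proved rather than silently replacing ``any $f$'' by a well-chosen one.
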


The bound \eqref{unsharpfamilybnd} is not sharp.
By using the algorithm to find all polynomials in $\sH(2,d)$ as above, we get
a computer assisted proof of the following improvement in a special case,
and this result is sharp.  That is, no better inequality is possible for $d
\leq 9$ and a 1-dimensional family.

\begin{prop} \label{familyest}
Let $\sF \subset \sH(2,d)$ be a $1$-dimensional family,
$d \leq 9$,
then for any $f \in \sF$
\begin{equation}
d \leq 2N(f)-6 .
\end{equation}
\end{prop}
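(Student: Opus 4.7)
The plan is to give a computer-assisted proof by exhaustively enumerating all $1$-parameter families in $\sH(2,d)$ for each $d \leq 9$ and checking the inequality against each one, then exhibiting explicit families that saturate the bound to prove sharpness. The approach extends the linear algebra method of Section~\ref{section:linalgmeth} from the sharp case (nullspace of dimension exactly $1$) to the dimension-$2$ case (nullspace of dimension $2$), which is where $1$-parameter families first appear.

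First, for each fixed $d \in \{3,4,\ldots,9\}$, I would enumerate candidate supports $S$ of monomials of degree at most $d$ with $|S| = N$ satisfying $d > 2N - 6$, i.e., exactly the cardinalities where the proposed inequality could fail. Concretely, this means checking $N = (d+5)/2$ for odd $d \in \{3,5,7,9\}$ and $N = (d+4)/2$ for even $d \in \{4,6,8\}$, the values not already ruled out by the bound of D'Angelo--Lebl. The restrictions from Section~\ref{section:graphthry} (in particular Proposition~\ref{weakxkym}, forcing exactly one pure term in each variable, each with coefficient $1$) cut the search space substantially even though those results were proved primarily for sharp polynomials; one can verify directly that the argument of Proposition~\ref{weakxkym} applies whenever the count of monomials matches the sink count, which is the regime of interest here.

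For each candidate support $S$, I would form the submatrix $A'$ as in Section~\ref{section:linalgmeth} and compute its nullspace. A $1$-parameter family with support $S$ in $\sH(2,d)$ exists only if $\dim \Nul(A') \geq 2$ and, in addition, the affine slice consisting of vectors whose image under $A$ produces all nonnegative entries and has support exactly $S$ contains an open interval. The latter condition is a linear-programming feasibility question that can be decided exactly over $\Q$. As in Section~\ref{section:linalgmeth}, one uses modulo-$p$ arithmetic (with a small prime such as $p=19$) to quickly discard supports for which $A'$ already has full rank, keeping the computation tractable.

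The main obstacle will be the combinatorial explosion in the number of candidate supports, which grows rapidly with $d$. However, since we only need $d \leq 9$ and we restrict to the narrow range $N = \lceil (d+5)/2 \rceil$, the search should be considerably smaller than the sharp-polynomial search at $d = 17$ reported in Section~\ref{section:linalgmeth}, and hence well within reach of the same implementation. Once the upper bound $d \leq 2N(f)-6$ is verified for all $d \leq 9$, sharpness follows by displaying, for each such $d$, an explicit $1$-parameter family (obtained as a byproduct of the same enumeration) whose members $f$ satisfy $d = 2N(f) - 6$; it is these families whose existence demonstrates that no strengthening of the inequality is possible in this range.
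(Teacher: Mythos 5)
Your overall strategy---enumerate candidate supports at the critical cardinality, compute the nullspace of the submatrix $A'$ from \S~\ref{section:linalgmeth}, and decide nonnegativity exactly---is the same computer-assisted route the paper takes: its proof of Proposition~\ref{familyest} is precisely ``run the algorithm of \S~\ref{section:linalgmeth} to find \emph{all} (not just sharp) polynomials in $\sH(2,d)$ for $d\leq 9$,'' with the multi-dimensional nullspaces sorted out by a heuristic plus hand-checking rather than your cleaner LP-feasibility test. You also correctly isolate the only nontrivial case, $N=\frac{d+5}{2}$ for odd $d$ (for even $d$ the D'Angelo--Lebl bound $d\leq 2(N-1)-3=2N-5$ already forces $d\leq 2N-6$ by parity, so those cases need no computation, contrary to your claim that they are ``not already ruled out'').

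There is, however, one step that would make your enumeration incomplete as described: you prune the search space using Proposition~\ref{weakxkym} (exactly one pure term in each variable, with coefficient $1$), justified by asserting that its argument applies ``whenever the count of monomials matches the sink count, which is the regime of interest here.'' But in the regime $d=2N-5$ the polynomial has one \emph{more} term than the minimal sink count $\frac{d+3}{2}$, so a monomial of $f$ need not correspond to a sink of its diagram, and the proof of Proposition~\ref{weakxkym} (which rests on ``sinks correspond exactly to nonzero terms'') does not transfer. Indeed nothing prevents two pure terms in the same variable once $f$ is not sharp: $\frac{1}{2}x+\frac{1}{2}x^2+\frac{3}{2}xy+y^2\in\sH(2,2)$ lies in a one-parameter family and has two pure $x$-terms. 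An enumeration pruned this way could silently skip exactly the kind of support that might carry a family violating the bound. The repair is simply to drop that pruning (the search at $N=\frac{d+5}{2}$, $d\leq 9$ remains small without it); with that change your procedure coincides with the paper's. The final step of exhibiting families attaining $d=2N(f)-6$ establishes sharpness of the inequality but is not needed for the statement itself.
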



\section{Mixed linear programming method} \label{section:linprog}

Another approach to computing sharp polynomials in $\sH(2,d)$
involves constraint
satisfaction of mixed-integer programs. That is to say, some variables will
take on integer (actually 0-1) values and others will be continuous. We will
apply classical methods, using branching and a naive form of cutting planes
for handling the integer variables. Good background references are
\cite{Dantzig1963} and \cite{Schrijver1986}.

We now describe in brief the setup of such problems. We are given an odd degree \(d\).  We prescribe that the 
minimal number of nonzero terms is
\(\frac{d+3}{2}\), i.e.\ the polynomial is sharp.
From theory just presented we know several constraints
on such polynomials.  Therefore we also prescribe all the constraints listed
in the last section.

We now assign two variables to each of the monomials not ruled out. One
variable will record whether that monomial goes into the polynomial; it takes on
values 0 or 1. The other is constrained to be nonnegative and corresponds to
the coefficient of the monomial in the sharp polynomial we attempt to construct.
These latter variables satisfy equations that arise from the identity
\(p(x,1-x)-1\equiv 0\); we set the coefficients of terms in every degree to
zero.

As shown earlier, one can multiply terms of the sharp polynomial with
powers of \((x+y)\) to
obtain the homogeneous polynomial in $\sH(2,d)$ of degree \(d\)
(and indeed, we get the
same equations as from zeroing coefficients of \(p(x,1 -x)-1\)).
This procedure in fact
gives upper bounds on each coefficient: if \(x^jy^k\) has coefficient
\(c_{j,k}\) then one can easily show that \(0 \leq c_{j,k} \leq \min \{
\binom{d}{j} , \binom{d}{k} \}\). 
With a bit more work one can deduce a generally stronger inequality: for each
$0 \leq m \leq d-j-k$ we have $c_{j,k} \leq \binom{d}{m}/\binom{d-j-k}{m}$.
Nonnegativity is imposed by the
requirements of our polynomials.
The second inequality is of significant interest,
because it allows us to relate the continuous variables to their discrete
counterparts. Specifically, if we call the corresponding 0-1 variable
\(b_{j,k}\) and call the minimum of the binomial quotients \(m_{j,k}\), then we
have \(c_{j,k}\leq m_{j,k} b_{j,k}\).

To simplify the computations we may impose a few other restrictions;
generally speaking, the more inequality restrictions we impose, the faster
the computation runs.  As before, we can insist that there be no pair of
neighbors of equal degree in the polynomial.  That is, we can impose that
the sharp polynomial does not contain 
$c_{k,m+1} x^ky^{m+1} + c_{k+1,m} x^{k+1}y^m$, for nonzero $c_{k,m+1}$
and $c_{k+1,m}$.  See Remark~\ref{noadjrmk}.

Another restriction is to insist that the sum of coefficients from the left
side of the Newton diagram (that is, monomials with \(\deg (x)>\deg (y)\)) be
larger than or equal to the sum from the right side.  This asymmetry is valid
since we do not care about equivalent polynomials obtained by exchanging
variables.

The actual code begins by setting up linear equations and inequalities based
on the discussion above. We solve the equations, thus eliminating some
variables. We use this result to adjust the inequalities accordingly.
The implementation is a standard branch-and-cut mixed linear programming
code.  We branch on the variables constrained to be integral.

The \textit{Mathematica} code to do what we have described in this section
occupies about 60 lines.
It handles the case $d=9$ in about 1.3 seconds, $d=11$ in 24 seconds, $d=13$
in 9 minutes, $d=15$ in 4.6 hours, and $d=17$ in 186 hours.
These timings are on a fairly recent CPU operating at 3.2 GHz, running
version 6.0.2 of \textit{Mathematica}, with settings to use the COIN-LP library
\cite{CLP} to solve the relaxed linear programs. They are of course also
dependent
on the extent to which the authors have found algorithm simplifications based
on the theory presented for these polynomials. A better understanding of terms
that must or must not arise in such polynomials would almost certainly lead to
algorithmic improvements.


\section{Uniqueness of sharp polynomials} \label{section:sharpmaps}

Partial information about when uniqueness fails can be summarized in the
the following theorem.
This theorem combines the results of this paper with the results of
\cite{DL}.  The
fact that uniqueness holds when $d=9$ was given without proof in~\cite{DL} in
anticipation of the present paper.

\begin{thm} \label{mainDLthm}
Uniqueness holds
when $d=1$, $d=3$, $d=5$, $d=9$, and $d=17$. Uniqueness fails in the following cases:
\begin{enumerate}[(i)]
\item
Suppose $d$ is even. Then uniqueness fails for all $d$.
\item
Suppose $d$ is congruent to $3$ mod $4$. Then uniqueness holds for $d=3$
and fails for $d\ge 7$.
\item
Let $k$ be a positive integer.  Uniqueness fails for $d$ of the form
\begin{equation*}
d = \frac{ (7 + 4\sqrt{3})^{k} + (7 - 4 \sqrt{3})^{k} }{2} ,
\end{equation*}
I.e.\ $d=7, 97, 1351, 18817, 262087, \ldots$
\item
Suppose $d>1$ and $d$ is congruent to $1$ mod $6$. Then uniqueness fails.
\end{enumerate}
\end{thm}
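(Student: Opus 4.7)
The plan is to decompose the theorem into three independent pieces and reduce every failure claim to exhibiting two sharp polynomials with distinct Newton supports, so that Proposition~\ref{onesig} delivers inequivalence. The uniqueness assertion for $d\in\{1,3,5,9,17\}$ is then immediate from Theorem~\ref{mainthm} and Table~\ref{tableofallsharp}, since each of the five asterisked rows lists a single entry and the table is complete up to swapping variables.

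For part (i), I split into $d=2$ and $d\geq 4$ even. When $d=2$ the polynomials $(x+y)^2$ and $x+xy+y^2$ both have $N=3$ (hence both are sharp in $\sH(2,2)$) and their supports differ. When $d\geq 4$ is even, the procedure \eqref{allevenex} applied to the ordered pair $(p,q)=(f_1,f_{d-1})$ yields $g_1(x,y)=x+y\,f_{d-1}(x,y)$, whose pure terms are $\{x,y^d\}$, while the pair $(f_{d-1},f_1)$ yields $g_2(x,y)=x^{d-1}+y^{d-1}(x+y)+p_0(x,y)$ (with $p_0$ the mixed part of $f_{d-1}$), whose pure terms are $\{x^{d-1},y^d\}$. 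Both are sharp by construction, and since $d\geq 4$ forces $1\neq d-1$, no swap of variables identifies the two pure-term sets; hence $g_1$ and $g_2$ are inequivalent.

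For parts (ii)--(iv), the required inequivalent sharp polynomials come from the constructions of Section~\ref{section:construction}, extending \cite{DL}. The common shape is: start from $f_d$, or from a product of two smaller sharp polynomials, and apply a substitution $(x,y)\mapsto(\phi(x,y),\psi(x,y))$ tuned to the congruence class of $d$, so that the image lies in $\sH(2,d)$ with nonnegative coefficients and no drop in degree. The condition $d\equiv 3\pmod{4}$ in (ii) triggers a substitution exploiting $f_3$; the condition $d\equiv 1\pmod{6}$ in (iv) uses the factor $6$ in $d-1$; for (iii) a specific quadratic substitution is iterated, whose degree recurrence $d_{k+1}=14 d_k-d_{k-1}$ has the closed form $d_k=\tfrac{1}{2}\bigl((7+4\sqrt{3})^{k}+(7-4\sqrt{3})^{k}\bigr)$. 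The main obstacle is (iii): one must produce the quadratic substitution explicitly, verify that each iterate stays in $\sH(2,d_k)$ and remains sharp (without degree drop or loss of positivity), match the recurrence to the Pell-like closed form, and show that the resulting support differs from that of $f_{d_k}$ for every $k$. Because sharpness must persist through infinitely many iterations, this step requires a uniform bookkeeping argument on supports and coefficients rather than a finite case analysis.
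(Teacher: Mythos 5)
Your treatment of the uniqueness claims and of part (i) is correct and matches the paper: uniqueness for $d\in\{1,3,5,9,17\}$ is exactly the content of Theorem~\ref{mainthm}, and for even $d$ the paper likewise relies on \eqref{allevenex}. Your explicit witness pairs $(f_1,f_{d-1})$ and $(f_{d-1},f_1)$, together with $(x+y)^2$ versus the Whitney map for $d=2$, do give two sharp polynomials whose pure-term exponent sets $\{1,d\}$ and $\{d-1,d\}$ cannot be matched by swapping variables once $d\ge 4$. (Proposition~\ref{onesig} is not actually needed here: distinct supports already force distinct polynomials.)

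Parts (ii)--(iv) are where the proposal has a genuine gap, and moreover the mechanism you describe is not the one the paper uses. The construction of \S~\ref{section:construction} is not a substitution $(x,y)\mapsto(\phi(x,y),\psi(x,y))$ applied to $f_d$; it is the replacement
\begin{equation*}
f(x,y) = f_d(x,y) - c\,x^jy^k\bigl(f_m(x,y)-1\bigr), \qquad m \text{ even},
\end{equation*}
which stays equal to $1$ on $x+y=1$ because $f_m-1$ vanishes there, and which preserves sharpness precisely when the terms of $c\,x^jy^k\tilde f_m$ (where $f_m=\tilde f_m-y^m$) cancel exactly against at least two terms of $f_d$, so that the two new monomials $c\,x^jy^k$ and $c\,x^jy^{k+m}$ do not increase the term count. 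The arithmetic hypotheses in (ii)--(iv) are conditions on ratios of the explicit coefficients $\frac{d}{s}\binom{d-1-s}{s-1}$ of $f_d$ that make such an exact cancellation possible; the Pell equation underlying (iii) arises from these ratio conditions, not from iterating a quadratic map, and it degenerates exactly when $d\equiv 1\pmod 6$, which is what yields (iv). Your sketch neither produces the required $x^jy^k$, $c$, and even $m$ for each congruence class, nor verifies nonnegativity of the resulting coefficients, and you explicitly flag (iii) as unresolved; since the entire content of the nonuniqueness claims is the existence of these second sharp polynomials, parts (ii)--(iv) remain unproved in your write-up. (To be fair, the paper itself defers the detailed verification of (ii)--(iv) to \cite{DL} and only sketches the construction in \S~\ref{section:construction}, but a self-contained proof must exhibit the construction and check positivity, which your proposal does not do.)
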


The first case not handled by this theorem is $d=21$.
It is computationally infeasible to completely test this case
with the algorithms we have so far.
Nonuniqueness in the theorem is proved by an explicit construction of new sharp
polynomials that is sketched out in the next section.
We can test these constructions more generally using a computer and get further
results on degrees where uniqueness fails.  We get a computer generated
proof of the following proposition, which contains more information than
Theorem~\ref{mainDLthm} for small degrees.

\begin{prop} \label{prop:possibledegs}
Uniqueness fails for all degrees $d \leq 149$ not contained in the
following list:
\begin{equation} \label{possibledegs}
1, 3, 5, 9, 17, 21, 33, 41, 45, 53, 69, 77, 81, 93,
105, 113, 117, 125, 129, 141, 149
\end{equation}
More precisely, \eqref{possibledegs} lists all degrees $d \leq 149$
where the procedure of \S~\ref{section:construction} fails to produce
a sharp polynomial besides the group invariant one.
\end{prop}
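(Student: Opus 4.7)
The plan is to reduce Proposition~\ref{prop:possibledegs} to an exhaustive, computer-assisted application of the construction of Section~\ref{section:construction}. Since the "more precisely" clause frames the conclusion as a statement about exactly which degrees the construction fails on, the proof is structured as an enumeration rather than as a structural argument: for every odd $d$ with $1 \leq d \leq 149$, run all instances of the construction applicable to $d$; if at least one instance outputs an admissible sharp polynomial inequivalent to $f_d$, mark $d$ as a failure-of-uniqueness degree; otherwise mark $d$ as a candidate entry of the list~\eqref{possibledegs}. The conclusion then follows by comparing this logged list to the list in the statement.

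Concretely, I would first implement the construction of Section~\ref{section:construction} as a subroutine that, on input $d$, loops over the finitely many parameter choices the construction admits (these are dictated by divisibility properties of $d$, as foreshadowed by the case split in Theorem~\ref{mainDLthm}: parity, congruences mod $4$ and mod $6$, and the Pell-type family). For each parameter choice the subroutine symbolically builds a candidate polynomial $p$ and performs four certifications: (a) $p(x,1-x) \equiv 1$, verified by substituting $y = 1-x$ and checking that every coefficient of the resulting univariate polynomial vanishes except the constant term which must equal $1$; (b) every coefficient of $p(x,y)$ is strictly positive; (c) the number of distinct monomials equals $(d+3)/2$, so that $p$ is sharp; and (d) $p$ is not equal to $f_d$ and not equal to $f_d$ with $x$ and $y$ swapped.

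Next, I would sweep over the odd integers $d \in \{1,3,\dots,149\}$, invoking the subroutine and recording, for each $d$, whether any parameter choice succeeds. By the corollary to Proposition~\ref{onesig}, sharp polynomials have rational coefficients, so the whole computation can be carried out in exact rational arithmetic; this eliminates any risk of sign errors when certifying positivity in step~(b). The degrees for which every parameter choice fails form the output list, which I would then compare entry-by-entry to~\eqref{possibledegs}; the proposition is proved precisely when the two lists coincide.

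The main obstacle is engineering rather than mathematical: faithfully covering every branch of the Section~\ref{section:construction} construction and certifying coefficient positivity rigorously across all candidates. Efficiency is a secondary concern, but should not be serious, because this sweep only evaluates the construction on fixed inputs rather than exploring a large search space, so it is far cheaper than the mixed-integer enumeration of Section~\ref{section:linprog}; the dominant cost is symbolic polynomial arithmetic of degree at most $149$. To guard against coding mistakes I would, as elsewhere in the paper, implement the sweep twice in independent systems and check that the two produced failure lists agree and that both match~\eqref{possibledegs}.
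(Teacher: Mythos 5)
Your proposal matches the paper's own (computer-generated) proof: the paper likewise sweeps all parameter choices $(m, j, k, c)$ of the construction \eqref{newexample} for each odd $d$ (in fact up to $513$), certifies nonnegativity of the resulting coefficients using the explicit formula for the coefficients of $f_d$, and records the degrees where every instance fails. The only minor caveat is that the parameters of the construction are the even $m<d$, the monomial $x^jy^k$, and the ratio $c$ (not congruence classes of $d$, which are merely consequences), but your enumeration-and-certification plan is essentially identical to what the paper does.
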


We have run the computer code for degrees up to $2^9+1$ and the sequence
above does not appear to thin out very rapidly.
It seems reasonable to conjecture
that the sequence is infinite.
Since we also know
that, at least up to degree 17, that the construction of
\S~\ref{section:construction} gives all
sharp examples, it is also reasonable to conjecture that the sequence of
degrees for which uniqueness holds is infinite.
The On-Line Encyclopedia of Integer Sequences~\cite{Sloane} 
did not include the sequence in \eqref{possibledegs}.
It is now entered as A143105.
The partial known sequence of degrees where uniqueness holds was entered
as A143106.  The encyclopedia does not contain any sequence that
is a subsequence of \eqref{possibledegs} and starts with $1,3,5,9,17$.

Note that the beginning of the list (up to 33, with the exceptions of 1 and 21)
are degrees of the form $2^k+1$.  It would be incorrect to assume
that uniqueness holds for such degrees.  In fact, uniqueness fails for
example in degrees $2^6+1$, $2^8+1$ and $2^9+1$.


\section{Construction of new sharp polynomials} \label{section:construction}

Except for the even degree case, all the new noninvariant sharp polynomials
constructed for the proof of
Theorem~\ref{mainDLthm} arise in a similar way.  For the nonuniqueness
in the even degree case 
see~\eqref{allevenex}.  There we construct a sharp polynomial of even degree
$d = d_1 + d_2$ from two sharp polynomials of odd degrees $d_1$ and $d_2$.
We observe that the number of possible sharp polynomials
goes to infinity for even degrees.  We also remark that using this method
we can also construct group invariant sharp polynomials by taking $d_1 = d_2 =
\frac{d}{2}$ and using a group invariant polynomial of degree $\frac{d}{2}$
in~\eqref{allevenex}.

Suppose that $d$ is odd.  Take some even $m < d$ and look at 
$f_m$.  Write 
\begin{equation}
f_m(x,y) = \tilde{f}_m(x,y) - y^m ,
\end{equation}
Now notice that $f_m = 1$ on $x+y = 1$, hence on $x+y = 1$ we have
\begin{equation}
\tilde{f}_m(x,y) = 1 + y^m .
\end{equation}
If we can find a constant $c$ and a monomial $x^jy^k$,
such that $cx^jy^k\tilde{f}_m(x,y)$ has at
least two terms common with $f_d(x,y)$, we can ``replace''
$cx^jy^k\tilde{f}_m(x,y)$ with $cx^jy^k(1+y^m)$.  More explicitly,
we can write
\begin{equation} \label{newexample}
\begin{split}
f(x,y) & := f_d(x,y) + cx^jy^k(1+y^m - \tilde{f}_m(x,y))
\\
& =
f_d(x,y) - cx^jy^k(f_m(x,y) - 1) .
\end{split}
\end{equation}
From the right hand side, it follows that $f(x,y) = 1$ on $x+y=1$.
If we can show that $f$ has all positive coefficients,
then we are finished.

As an example, take $d=7$.  Then $f_7 = x^7 + 7 x^5 y + 14 x^3 y^2 + 7 xy^3 +
y^7$ and $f_2 = x^2 + 2y - y^2$.  Then we get the new sharp polynomial
\begin{equation}
\begin{split}
f(x,y) & := f_7(x,y) - 7x^3y(f_2(x,y) - 1)
\\
& =
x^7 + 7 x^5 y + 14 x^3 y^2 + 7 xy^3 + y^7
-(7x^5y + 14x^3y^2 - 7x^3y^3 -7x^3y)
\\
& =
x^7  + 7x^3y^3 + 7x^3y + 7 xy^3 + y^7 .
\end{split}
\end{equation}

All sharp polynomials known to the authors are obtained by \eqref{newexample}
in one or more steps.  In particular, we know all noninvariant
sharp polynomials of odd degree $d \leq 17$ are constructed this way.
When $d=13$ then two steps are required, that is
we need to repeat the procedure \eqref{newexample} twice,
to obtain one of the noninvariant sharp polynomials.
To find new polynomials this way,
we look for certain ratios of terms in the possible coefficients
of $f_d$, and then verify that the procedure does not introduce
negative terms.
Each known noninvariant sharp polynomial
gives rise to an infinite sequence of degrees
satisfying a certain Pell equation (see \cite{DL}) where the same construction 
applies.  
It is still necessary, however, to check that
all the
coefficients in the new polynomials are nonnegative.  If all the coefficients
are nonnegative, we get an
infinite sequence of new sharp polynomials in different degrees.
For a few specific cases this is essentially what was done in \cite{DL}
to prove the nonuniqueness parts of Theorem~\ref{mainDLthm}.
The computations, while elementary, quickly become long and tedious.
The Pell equation is degenerate in one very specific case
when $d \equiv 1 \pmod{6}$.  Otherwise, the sequence obtained is very sparse
and ``thins out'' very quickly as the degree rises.  See \cite{DL} for more
information. 

We have formulas for the coefficients of terms of $f_d$, that is,
we know that except for $s=0$, the coefficient of $x^{d-2s}y^s$ in $f_d$
is $\frac{d}{s}\binom{d-1-s}{s-1}$.  See for example~\cite{D:book}.
Hence, it is not hard to check via computer for degrees where the construction
produces new examples.  We have run simple computer code to
try this procedure for all possible parameters for $d \leq 513$.
A partial list of degrees where the procedure fails to produce new sharp
polynomials is given in
Proposition~\ref{prop:possibledegs}.


\section{Background in complex geometry} \label{section:CR}

As stated earlier, the primary motivation for this work
comes from CR geometry.  Let us
describe this connection in detail.  For more information about
complex analysis and CR geometry see the book~\cite{D:book}.
There has been much interest recently in the CR geometry community
in studying the complexity
of CR maps between manifolds.  An interesting model case
comes about by studying proper holomorphic maps between unit balls in
$\C^n$.  Due to the symmetries of the unit ball, this
problem has nontrivial connections to many areas of mathematics including
number theory, combinatorics and real algebraic geometry.

To be more precise,
let $\bB_n \subset \C^n$ be the unit ball and
suppose that $F \colon \bB_n \to \bB_N$ is a proper holomorphic map.
When $F$ extends to a continuous map of the closed ball
$\overline{\bB}_n$ then the map is proper if it maps the boundary
of $\bB_n$ to the boundary of $\bB_N$.
It is not hard
to see using elementary complex analysis that if the target dimension $N$
is smaller than the source dimension $n$, then there are no proper maps.
Alexander~\cite{Alexander} proved that
when $n=N$ and $n \geq 2$,
then all proper maps are automorphisms of the unit ball,
i.e.\ linear fractional.
See also the survey \cite{Forstneric:survey}
or the book \cite{D:book} for
more information about the problem.
We will discuss only $n=2$ from now on.

When $F$ extends to the closure of the ball
its restriction to the boundary
defines a CR map
from the unit sphere in $\C^2$ to the unit sphere in $\C^N$.
When $F$ is sufficiently smooth up to the boundary
then Forstneri\v{c}~\cite{Forstneric} proved that $F$ is rational and that 
the degree of $F$ is bounded in terms of $2$ and $N$.
Faran~\cite{Faran} classified all maps when $N=3$.

D'Angelo has made a systematic study of rational proper maps
(see for example \cite{D:korea})
and
classified the polynomial maps~\cite{D:poly} in the
following sense.  If we allow the target dimension to be large enough,
all polynomial
proper maps are obtained by a finite number of operations from
the unique homogeneous map.
D'Angelo conjectured that
\begin{equation} \label{eqcrbnd}
\deg F \leq
2N-3 .
\end{equation}

Further, we will 
assume that $F$ is a monomial map, that is, every component of $F$
is a single monomial.
We now change notation
slightly.  To say that $F$ extends to the boundary is to say
that $\norm{F(z)}^2 = 1$ whenever $\norm{z}^2 = 1$, where $\norm{\cdot}$
is the standard euclidean norm on $\C^N$ and $\C^2$ respectively.  
When $F$ is a monomial map, then we can replace $\abs{z_1}^2$
by the real variable $x$ and $\abs{z_2}^2$ by the real variable $y$.  
$\norm{F(z)}^2$ then becomes a real polynomial in $x$ and $y$ of same
degree as $F$ and with $N$ nonnegative coefficients.  Similarly
$\norm{z}^2$ becomes $x+y$.
Recall that the set of polynomials of degree $d$ with nonnegative
coefficients such that $p(x,y) = 1$ whenever $x+y = 1$ is denoted
by $\sH(2,d)$.
The inequality \eqref{eqmainbound}
proved in~\cite{DKR} therefore proves the conjecture
\eqref{eqcrbnd} in the special case when the map is a monomial
map.  Furthermore, when $d$ is odd,
the polynomials~\eqref{eqinvmaps} induce monomial
proper maps of balls such that $d = 2N-3$.  Hence if the
conjecture~\eqref{eqcrbnd}
is true for all CR maps, then it is sharp in the sense that the bound is
the best possible.
The maps induced by \eqref{eqinvmaps} are group invariant and hence
induce maps of lens spaces.

A complete classification of the monomial maps is an important
first step in classification of all CR maps between spheres,
and is the main motivation for doing the computations in this paper.
For example, Faran's result~\cite{Faran} on the classification of maps from
$\bB_2$ to $\bB_3$ says that the map is equivalent to one of four possible
monomial maps.  In our language of polynomials in $\sH(2,d)$,
these are the polynomials $x+y$, $x+xy+y^2$, $x^2+2xy+x^2$ and $x^3+3xy+y^3$.

The sharp maps arising from $f_d$ are group invariant under the action of a
finite subgroup of the unitary group $U(2)$ generated by
$\left[\begin{smallmatrix}
 \epsilon  & 0 \\
 0 & \epsilon^m
\end{smallmatrix}\right]$ for a primitive root of unity $\epsilon$.
An natural question
to ask is to find all other invariant maps.

Rudin~\cite{Rudin84} proved that a homogeneous proper map of balls
is equivalent up to unitary transformations
to the identity map tensored
with itself $d$ times.  This map is 
invariant under the action of a cyclic group generated by the matrix $\epsilon
I$, where $\epsilon$ is a primitive root of unity and $I$ is the identity.
Forstneri\v{c}~\cite{Forstneric86}
observed that only
fixed-point-free matrix groups could
arise in the context of proper holomorphic maps between balls that are
smooth up to the boundary.
Based on this work
the second author together with
D'Angelo, \cite{Lichtblau92} and \cite{DAngeloLichtblau92}, proved that (up
to linear transformations)
the only groups $\Gamma$ for which there exists a $\Gamma$-invariant rational
proper map of $\bB_2$ to $\bB_N$ are the cyclic fixed-point-free matrix 
groups generated by
$\left[\begin{smallmatrix}
 \epsilon  & 0 \\
 0 & \epsilon^m
\end{smallmatrix}\right]$
where $\epsilon$ is a primitive \(d^{\text{th}}\) root of 1 for
some odd \(d\), \(m =  1,2\), or \(2m = 1\) modulo \(d\).  Therefore,
the maps generated by $f_d$ and the homogeneous maps are the only 
group invariant maps of spheres.


\bibliographystyle{alpha}
\bibliography{properll}

\begin{thebibliography}{{GMP}07}

\bibitem[Ale77]{Alexander}
H.~Alexander.
\newblock Proper holomorphic mappings in {$C\sp{n}$}.
\newblock {\em Indiana Univ. Math. J.}, 26(1):137--146, 1977.

\bibitem[D'A88]{D:poly}
John~P. D'Angelo.
\newblock Polynomial proper maps between balls.
\newblock {\em Duke Math. J.}, 57(1):211--219, 1988.

\bibitem[D'A93]{D:book}
John~P. D'Angelo.
\newblock {\em Several complex variables and the geometry of real
  hypersurfaces}.
\newblock Studies in Advanced Mathematics. CRC Press, Boca Raton, FL, 1993.

\bibitem[D'A03]{D:korea}
John~P. D'Angelo.
\newblock Proper holomorphic mappings, positivity conditions, and isometric
  imbedding.
\newblock {\em J. Korean Math. Soc.}, 40(3):341--371, 2003.

\bibitem[D'A04]{D:numbertheory}
John~P. D'Angelo.
\newblock Number-theoretic properties of certain {CR} mappings.
\newblock {\em J. Geom. Anal.}, 14(2):215--229, 2004.

\bibitem[Dan63]{Dantzig1963}
George~B. Dantzig.
\newblock {\em Linear programming and extensions}.
\newblock Princeton University Press, Princeton, N.J., 1963.

\bibitem[DKR03]{DKR}
John~P. D'Angelo, {\v{S}}imon Kos, and Emily Riehl.
\newblock A sharp bound for the degree of proper monomial mappings between
  balls.
\newblock {\em J. Geom. Anal.}, 13(4):581--593, 2003.

\bibitem[DL92]{DAngeloLichtblau92}
John~P. D'Angelo and Daniel~A. Lichtblau.
\newblock Spherical space forms, {CR} mappings, and proper maps between balls.
\newblock {\em J. Geom. Anal.}, 2(5):391--415, 1992.

\bibitem[DL09a]{DL}
John~P. D'Angelo and Ji{\v{r}}{\'{\i}} Lebl.
\newblock Complexity results for {CR} mappings between spheres.
\newblock {\em Internat. J. Math.}, 20(2):149--166, 2009.
\newblock arXiv:0708.3232.

\bibitem[DL09b]{DL:families}
John~P. D'Angelo and Ji{\v{r}}{\'{\i}} Lebl.
\newblock On the complexity of proper holomorphic mappings between balls.
\newblock {\em Complex Var. Elliptic Equ.}, 54(3-4):187--204, 2009.
\newblock arXiv:0802.1739.

\bibitem[DLP07]{DLP}
John~P. D'Angelo, Ji{\v{r}}{\'{\i}} Lebl, and Han Peters.
\newblock Degree estimates for polynomials constant on a hyperplane.
\newblock {\em Michigan Math. J.}, 55(3):693--713, 2007.

\bibitem[DS05]{DilcherStolarsky}
Karl Dilcher and Kenneth~B. Stolarsky.
\newblock A {P}ascal-type triangle characterizing twin primes.
\newblock {\em Amer. Math. Monthly}, 112(8):673--681, 2005.

\bibitem[Far82]{Faran}
James~J. Faran.
\newblock Maps from the two-ball to the three-ball.
\newblock {\em Invent. Math.}, 68(3):441--475, 1982.

\bibitem[For86]{Forstneric86}
Franc Forstneri{\v{c}}.
\newblock Proper holomorphic maps from balls.
\newblock {\em Duke Math. J.}, 53(2):427--441, 1986.

\bibitem[For89]{Forstneric}
Franc Forstneri{\v{c}}.
\newblock Extending proper holomorphic mappings of positive codimension.
\newblock {\em Invent. Math.}, 95(1):31--61, 1989.

\bibitem[For93]{Forstneric:survey}
Franc Forstneri{\v{c}}.
\newblock Proper holomorphic mappings: a survey.
\newblock In {\em Several complex variables (Stockholm, 1987/1988)}, volume~38
  of {\em Math. Notes}, pages 297--363. Princeton Univ. Press, Princeton, NJ,
  1993.

\bibitem[{GMP}07]{GMP}
The {GMP Team}.
\newblock The {GNU} multiple precision arithmetic library, 2007.
\newblock \url{http://gmplib.org}.

\bibitem[Leb07a]{Genius}
Ji\v{r}\'i Lebl.
\newblock Genius 1.0.2, 2007.
\newblock \url{http://www.jirka.org/genius.html}.

\bibitem[Leb07b]{Lebl:thesis}
Ji\v{r}\'i Lebl.
\newblock {\em Singularities and complexity in {CR} geometry}.
\newblock PhD thesis, University of California at San Diego, 2007.

\bibitem[LH03]{CLP}
Robin Lougee-Heimer.
\newblock The {C}ommon {O}ptimization {IN}terface for {O}perations {R}esearch.
\newblock {\em IBM Journal of Research and Development}, 47(1):57--66, 2003.

\bibitem[Lic91]{Lichtblau:thesis}
Daniel Lichtblau.
\newblock {\em Invariant Proper Holomorphic Maps Between Balls}.
\newblock PhD thesis, University of Illinois at Urbana-Champaign, 1991.

\bibitem[Lic92]{Lichtblau92}
Daniel Lichtblau.
\newblock Invariant proper holomorphic maps between balls.
\newblock {\em Indiana Univ. Math. J.}, 41(1):213--231, 1992.

\bibitem[LWW04]{LWW}
Nicholas~A. Loehr, Gregory~S. Warrington, and Herbert~S. Wilf.
\newblock The combinatorics of a three-line circulant determinant.
\newblock {\em Israel J. Math.}, 143:141--156, 2004.

\bibitem[Osl01]{Osler}
Thomas~J. Osler.
\newblock Cardan polynomials and the reduction of radicals.
\newblock {\em Mathematics Magazine}, 74(1):26--32, 2001.

\bibitem[Rud84]{Rudin84}
Walter Rudin.
\newblock Homogeneous polynomial maps.
\newblock {\em Nederl. Akad. Wetensch. Indag. Math.}, 46(1):55--61, 1984.

\bibitem[Sch86]{Schrijver1986}
Alexander Schrijver.
\newblock {\em Theory of linear and integer programming}.
\newblock Wiley-Interscience Series in Discrete Mathematics. John Wiley \& Sons
  Ltd., Chichester, 1986.
\newblock A Wiley-Interscience Publication.

\bibitem[Slo]{Sloane}
Neil J.~A. Sloane.
\newblock The on-line encyclopedia of integer sequences.
\newblock \url{http://www.research.att.com/~njas/sequences/}.

\bibitem[{Wol}07]{Mathematica}
{Wolfram Research, Inc.}
\newblock {\em Mathematica, Version 6.0}.
\newblock Champaign, IL, 2007.
\newblock \url{http://www.wolfram.com}.

\bibitem[Won93]{Wono}
Setya-Budhi~Marcus Wono.
\newblock {\em Proper Holomorphic Mappings in Several Complex Variables}.
\newblock PhD thesis, University of Illinois at Urbana-Champaign, 1993.

\end{thebibliography}

\end{document}